\newcommand{\epp}{_{\epsilon}}
\newcommand{\Ge}{G_{\epsilon}}
\newcommand{\Fe}{F_{\epsilon}}
\newcommand{\Se}{S_{\epsilon}}
\newcommand{\Ne}{N_{\epsilon}}
\newcommand{\Pe}{P_{\epsilon}}
\newcommand{\Ke}{K_{\epsilon}}
\newtheorem{definition}{Definition}
\newtheorem{theorem}[definition]{Theorem}
\newtheorem{corollary}[definition]{Corollary}
\newtheorem{proposition}[definition]{Proposition}
\begin{document}

\title{Schiffer variations and Abelian differentials\footnote{2010{\em Mathematics Subject Classification.} Primary 32G15, 14H15}}         
\author{Scott A. Wolpert}        
\date{\today}          
\maketitle
\begin{abstract}
	Deformations of compact Riemann surfaces are considered using a \v{C}ech cohomology sliding overlaps approach. Cocycles are calculated for conformal cutting and regluing  deformations at zeros of Abelian differentials.  Deformations fixing the periods of a differential and deformations splitting zeros are considered.  A second order deformation expansion is presented for the Riemann period matrix. A complete deformation expansion is presented for Abelian differentials.  Schiffer's kernel function approach for deformations of a Green's function is followed.  
\end{abstract}

\section{Introduction}
We consider compact Riemann surfaces of positive genus with accompanying Abelian differentials.  An Abelian differential provides a translation surface structure and a period functional on singular homology.  We are interested in geometrically defined deformations of surfaces and differentials - deformations defined by cutting and edge regluing. Our purpose is to study deformations by a \v{C}ech cohomology style sliding overlaps approach. The approach does not involve  potential theory or the  $\bar{\partial}$ operator.   

Ahlfors \cite{Ahanal} and Rauch \cite{Rauch} understood the complex structure on Teichm\"{u}ller space by considering families of compact Riemann surfaces as varying branched covers of $\mathbb P^1$.  They showed that the Riemann period matrix varies holomorphically as a function of the branch points and that this property characterizes the complex structure on the complement of the locus of hyperelliptic surfaces.  Rauch developed his celebrated variational formula as part of the study \cite{Rauch}.  Schiffer also developed an approach for variations of the period matrix based on his deformation of removing a disc and regluing by a function defined in a neighborhood of the boundary \cite[Chapter 7]{SchS1}.  Schiffer's approach predates the advent of \v{C}ech cohomology and the cutting and pasting deformations now studied in Teichm\"{u}ller dynamics. 

The present investigation is motivated by several goals.  The first is to present formulas suitable for cutting and pasting deformations. The second is to present formulas for higher derivatives of the period mapping.  The third is to present formulas for relative deformations, deformations fixing the periods of a differential. Focus is given to deformations of zeros of  differentials, including splittings of zeros.    

In Sections \ref{slit} and \ref{4var}, we  use scalings of classical conformal slit mappings to describe cut and reglue deformations at a zero of an Abelian differential.  Explicit families are described, including splitting higher order zeros.  The deformations are local; an Abelian differential is only deformed in a neighborhood of a zero.   The periods of the differential are not changed by the constructions. Each deformation varies the lengths of horizontal and vertical trajectories ending at a zero. The first and second \v{C}ech deformation cocycles with values in vector fields are computed; see formulas (\ref{SSvar}) through (\ref{SKvar}). A slit mapping is the Riemann mapping for the complement (including the point at infinity) of a configuration of line segments at the origin.  We use combinations of scalings of slit mappings to prescribe deformations.  The given families describe cutting open along line segments and regluing the resulting edges by a new pattern.  If the construction is performed in the coordinate for the normal form of a differential, then the differential reglues to a new differential.  The line segments  correspond to trajectory segments of the differential.  Two of the constructions relate directly to the Kontsevich-Zorich cutting and pasting deformations of a zero \cite[Section 4.2, Figure 2]{KontZor}.

Green's functions are the basic analytic tool for Riemann surfaces.  Schiffer gives deformation formulas for Green's functions and obtains Abelian differentials as integrals of Green's functions.  In Sections \ref{sec4} and \ref{sec5}, we  follow Schiffer's approach for Green's functions and Abelian differentials to derive deformation expansions \cite[Chapters 3, 4 and 7]{SchS1}.  Schiffer begins with $d\Omega_{q_0q_1}$, the Abelian differential of the third kind, periods with vanishing real parts, with a pole of residue $-1$ at $q_0$ and $+1$ and at $q_1$.   The multivalued function  $\Omega_{q_0q_1}$ is the indefinite integral of the differential $d\Omega_{q_0q_1}$.  The {\em double pole Green's function} is defined as
$$
V(p,p_0;q,q_0)\,=\,\Re\{\Omega_{qq_0}(p)\,-\,\Omega_{qq_0}(p_0)\}
$$
and the Abelian kernel as
$$
\Lambda(p,q)\,=\,-\frac{1}{\pi}\frac{\partial^2\Omega_{qq_0}(p)}{\partial p\partial q}.
$$
The Abelian differential dual to a cycle is the integral of $\Lambda$ over the cycle.  Deformation formulas are given in terms of the Green's function, the Abelian differentials of the third kind and the Abelian kernel.  
 
A deformation of a compact surface $R$ is described as follows.  The construction is for a local coordinate $z$ with domain $U$ and a curve $\gamma$ bounding a disc in the domain of $z$.  
Let $r(z)$ be a holomorphic function in $z$ with domain a neighborhood of $\gamma$.     Provided $r(z)$ is suitably small, the data defines a new Riemann surface $R^*$ given by attaching the exterior of $\gamma$ (the complement of the disc bound by $\gamma$) to the interior of $\gamma^*=\gamma+r(\gamma)$ by identifying $z(p)$ on $\gamma$ to $z(p)+r(z(p))$ on $\gamma^*$.  

Our considerations begin with Schiffer's exact relation for the variation of the Green's function; see Theorem \ref{exactform}. Expansions are derived from the exact relation.  In Theorem \ref{main} the exact relation is combined with the definition of the deformed structure and Taylor's theorem to give a second order expansion for the Green's function.  Then in Corollary \ref{derivperi}, the relation between the Abelian kernel $\Lambda$ and a basis of differentials is used to give a second order expansion for the Riemann period matrix.  If the deformation cocycle has coefficient a rational function then the expansion is evaluated in terms of the values and derivatives of the Abelian kernel and the basis of differentials.  In Corollary \ref{formu}, a local coordinate $z$ with $z(p)=0$ and the particular deformation cocycle $(\epsilon\frac{a}{z}+\frac{\epsilon^2}{2}\frac{b}{z^m})\frac{d}{dz}$ for $a,b\in \mathbb C$ and $m\in \mathbb N$ are considered. We find the second order expansion for the Riemann period matrix
\begin{multline*}\label{mainform}
\Gamma_{\mu\nu}^*\,=\,\Gamma_{\mu\nu}\,+\,\frac{\epsilon\pi i}{2} a\,\omega_{\mu}(p)\omega_{\nu}(p)\,-\,\frac{\epsilon^2\pi^2 i}{2}a^2\lambda(p)\omega_{\mu}(p)\omega_{\nu}(p)\\
+\frac{\epsilon^2\pi i}{4}\bigg(a^2\omega_{\mu}'(p)\omega_{\nu}'(p)\,+\,b\frac{1}{(m-1)!}\frac{d^{m-1}}{dz^{m-1}}\big(\omega_{\mu}(z)\omega_{\nu}(z)\big)\big\vert_{z=p}\bigg)\,+\,O(\epsilon^3),
\end{multline*}
where for a canonical homology basis, the differentials $\{\omega_{\mu}\}$ are dual in the integral pairing to the period functionals and the differentials are evaluated in the local coordinate $z$ and $\lambda$ is a local coordinate  regularization of $\Lambda$. The first order expansion is essentially Rauch's formula \cite{Rauch} and was already given by Schiffer in \cite[Section 7.8]{SchS1}.  We note from Corollary \ref{derivperi} that if $z$ is the coordinate for the normal form $z^mdz$ at a zero of order $m$ at point $p$ for a differential $\omega$, then the deformation cocycles
$$
\frac{1}{z}\frac{d}{dz},\,\dots\,,\frac{1}{z^m}\frac{d}{dz}
$$
in a neighborhood of $p$, give vanishing of the first variation of the periods of $\omega$.  In Proposition \ref{defbasis}, we find for a non trivial differential $\omega$, a basis for the infinitesimal deformations of the surface is given by any $2g-3$ Schiffer deformations of the (possibly multiple) zeros of $\omega$ and $g$ Schiffer deformations at a general point.

We follow Schiffer's approach in Proposition \ref{omegform} to find the complete expansion for the variation of an Abelian differential.  Again if the deformation cocycle has coefficient a rational function, the expansion is evaluated in terms of the values and derivatives of the Abelian kernel and the initial differential. For the local coordinate $z$ with $z(p)=0$, and the deformation cocycle $\epsilon\frac{a}{z}\frac{d}{dz}$, we find the second order expansion for a basis differential  
 \begin{multline*} 
 \omega_{\mu}^*(q)\,=\, \omega_{\mu}(q)\,-\,\epsilon\pi a\Lambda(q,p)\omega_{\mu}(p)
\\
 +\,\frac{\epsilon^2}{2}\big(2\pi^2 a^2\Lambda(q,p)\lambda(p)\omega_{\mu}(p)\,-\,\pi a^2\frac{\partial}{\partial p}\Lambda(q,p)\frac{\partial}{\partial p}\omega_{\mu}(p)\big)\,+\,O(\epsilon^3),
 \end{multline*}
for $q\ne p$ and quantities evaluated in the variable  $z$.  
The present expansions for Abelian differentials and the Riemann period matrix can be compared to the complete expansions of Karpishpan\cite{Karp}, Yin \cite{Yin}, Zhao-Rao \cite{ZhRa}, Liu-Zhao-Rao \cite{LiZhRa} and Yamada \cite{AYam}. Karpishpan shows that the differentials of the map are induced by cup products involving the Kodaira-Spencer class and an Archimedean cohomology.  The remaining authors use $\bar{\partial}$-methods and give expansions with iterated integrals of the Green's function acting on one-forms. The authors investigate  the period mapping of  Teichm\"{u}ller space to the Siegel upper half space.  Yin considers the relation of the image to geodesics of the Siegel metric.  Zhao-Rao develop formulas for the induced metric, its second fundamental form and its curvature. Liu-Zhao-Rao investigate the Torelli theorems. Yamada follows Schiffer's approach of Green's functions and the Abelian kernel to give degeneration expansions for Abelian differentials and the period matrix. 

Colombo and Frediana develop complete formulas for the induced Siegel metric, its second fundamental form and its curvature in terms of Schiffer variations \cite{ColFr}. Consider a basis of Abelian differentials $\{\alpha_{\mu}\}$ dual to the {\em A cycles} of a canonical homology basis with $\Pi_{\mu\nu}$ the corresponding Riemann period matrix.  For a local coordinate $z$ with points $p,q$ in its domain, the Siegel pairing of  Schiffer variations is 

$$
\big\langle \frac{1}{z-z(p)}\frac{d}{dz},\frac{1}{z-z(q)}\frac{d}{dz}  \big\rangle\,=\,4\pi^2\big(\sum_{\mu,\nu}\alpha_{\mu}(p)\,\big(\Im \Pi\big)^{-1}_{\mu\nu}\, \overline{\alpha_{\nu}(q)}\big)^2, 
\ \cite{ColFr}.
$$

The present investigation is motivated by the significant and current research on families of Abelian differentials.  Particular motivation comes from the work of Eskin-Mirzakhani-Mohammadi on the dynamics of the $\operatorname{SL(2;\mathbb R)}$ action on differentials \cite{EMM}, Grushevsky-Krichever on isoperiodic families of meromorphic differentials and the topology of the moduli space \cite{GruKri, GruKrifol}, Kontsevich-Zorich on the topology of families of differentials with prescribed zero orders \cite{KontZor}, McMullen on the special structure of isoperiodic families of differentials \cite{McMbill, McMiso} and especially Calsamiglia-Deroin-Francaviglia on Schiffer variations and the general structure of isoperiodic families \cite{Der}. An overview of recent results is given in the expository article of Alex Wright 
\cite{Wright}.  It is my pleasure to thank Alex Wright for conversations and posing the question of evaluating the variation of the period matrix.

\section{Slit mappings}\label{slit}
The analytic function
\begin{equation*}
f(z)\,=\,z\prod^n_{\nu=1}(1-\frac{e^{-i\theta_{\nu}}}{z})^{\alpha_{\nu}}
\end{equation*}
for $\alpha_{\nu}$ positive, with $\alpha_1+\cdots+\alpha_n=2$, and $\theta_1<\cdots<\theta_n$ is the Riemann mapping from $\{|z|>1\}\subset\hat{\mathbb{C}}$ to the complement in $\hat{\mathbb C}$ of an arrangement of radial slits at the origin \cite[Theorem 2.6, Example 2.1]{Pomm}.  The mapping is asymptotic to the identity at infinity $f(z)=z+O(1/|z|)$. The radial slits give angular sectors at the origin of measures $\pi\alpha_{\nu}, \nu=1,\cdots,n$. The angles $\theta_{\nu}, \nu=1,\cdots,n,$ are the preimages of the origin on the unit circle $\{|z|=1\}$ and also determine the lengths of the radial slits.  The reciprocal function $1/f(z)$ defines a mapping to a star like domain; a domain convex relative to the origin.  The mapping represents a special case of a Schwarz-Christoffel mapping - the configuration of slits can be considered as a polygon with empty interior and pairs of consecutive sides coinciding.  

We are interested in four particular mappings. The first is the classical slit mapping
\begin{equation}\label{koebemap}
S(z)\,=\,z\,+\,\frac1z\,,
\end{equation}
the Riemann mapping from the exterior of the unit disc (or equivalently the unit disc) to the complement of $\{w\mid -2\le \Re w\le 2,\Im w=0\}$. 
The second is for $n$ a positive integer, the mapping
\begin{equation}\label{star}
N(z)\,=\,z(1-\frac{1}{z^n})^{2/n}\,,
\end{equation}
the Riemann mapping from the exterior of the unit disc to the complement of the regular $n$-star at the origin.  The roots of unity are the preimages of the origin under the mapping. The midpoints between consecutive roots are the preimages of the tips of the star. The mapping for $n=2$ is a conjugation by a rotation of the classical slit map (\ref{koebemap}).  
\begin{figure}
\centering
\includegraphics[width=6cm]{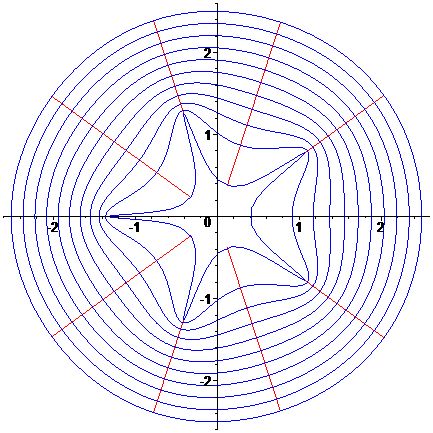}
\caption[The 5 star]{The $5$-star. The contour curves are the images by $N(z)$ of the polar coordinate contours from the domain $\{|z|>1\}$.}
\label{fig:5spike}
\end{figure}

The third is the special mapping
\begin{equation}\label{sectormap}
P(z)\,=\,z(1+\frac1z)^{4/3}(1-\frac1z)^{2/3}\,,
\end{equation}
the Riemann mapping from the exterior of the unit disc to the complement of equal length slits, each at angle $\pi/3$, with the positive real axis.  
\begin{figure}
\centering
\includegraphics[width=6cm]{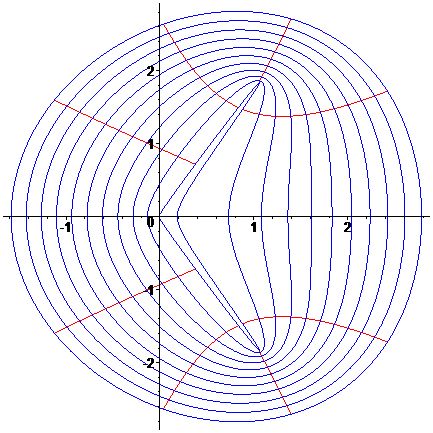}
\caption[The 1/3 sector]{The special $2\pi/3$ sector mapping. The contour curves are the images by $P(z)$ of the polar coordinate contours from the domain.}
\label{fig:23sector}
\end{figure}

\noindent The fourth for a parameter $0\le \theta\le \pi/2$ is 
\begin{equation} \label{skeanmap}
K(z)\,=\,z(1-\frac{e^{2i\theta}}{z^2})^{1/2}(1-\frac{e^{-2i\theta}}{z^2})^{1/2}\,,
\end{equation}  
the Riemann mapping from the exterior of the unit disc to the complement of a  horizontal-vertical skean.  For $\theta$ small, the skean has a shorter horizontal segment and for $\theta$ close to $\pi/2$, the skean has a shorter vertical segment.  The points $e^{i\theta},e^{i(\pi-\theta)},e^{i(\theta-\pi)},e^{-i\theta}$ are the preimages of the origin on the unit circle.  For $\theta=\pi/4$, the mapping is a conjugation by a rotation of the mapping $N(z)$ for $n=4$. 

\begin{figure}
\centering
\includegraphics[width=6cm]{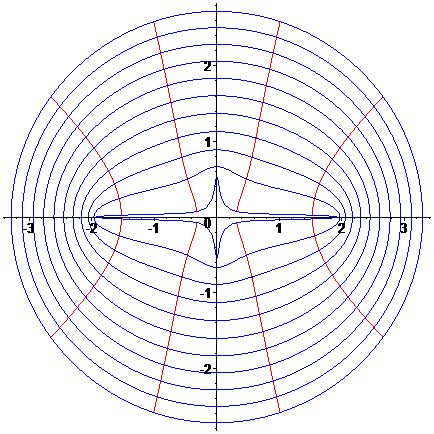}
\caption[The horizontal skean.]{The skean with $\theta=1.3$. The contour curves are the images by $K(z)$ of the polar coordinate contours from the domain.}
\label{fig:skean}
\end{figure}

Mappings between slit domains are given by considering the composition of one map and the inverse of a second map.  For a map $F(z)$ asymptotic to the identity at infinity (such as the above maps) then the scaling $\epsilon\, F(z/\epsilon)$ is a variation of the identity map for $\epsilon$ small.  In preparation for considering compositions and scalings, we note elementary formulas.
\begin{proposition}
	\label{elem}
Let $w=f(z)$ be holomorphic in a neighborhood of the origin, fixing the origin, and with a local inverse function.  The initial derivatives of the inverse are $(f^{-1})'(0)=(f'(0))^{-1}$ and $(f^{-1})''(0)=-f''(0)(f'(0))^{-3}$. Let $\Fe(z)$ and $\Ge(z)$ be holomorphic for $\epsilon$ small and $\{|z|>c\}$, with $F_0(z)=G_0(z)=z$.  For a constant $a$, the initial parameter derivatives of $\Ge(a\Fe(z))$ are
	\begin{equation}\label{first}
	\frac{d}{d\epsilon}\Ge(a{\Fe}(z))\big\vert_{\epsilon=0}\,=\,\dot{\Ge}(az)\,+\,a\dot{\Fe}(z)
	\end{equation}
	and
	\begin{equation}\label{second}
	\frac{d^2}{d\epsilon^2}\Ge(a\Fe(z))\big\vert_{\epsilon=0}\,=\,\ddot{\Ge}(az)\,+\,2\big(\frac{d}{dz}\dot{\Ge}(az)\big)a\dot{\Fe}(z)\,+\,a\ddot{\Fe}(z)
	\end{equation}
 where $\,\dot{}\,$ indicates an $\epsilon$ derivative evaluated at the origin.
\end{proposition}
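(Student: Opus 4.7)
The inverse-function formulas follow from a double application of the chain rule. I would differentiate the identity $f(f^{-1}(w))=w$ in $w$ to obtain $f'(f^{-1}(w))(f^{-1})'(w)=1$, and evaluate at $w=0$ (using $f^{-1}(0)=0$) to read off $(f^{-1})'(0)=1/f'(0)$. Differentiating a second time gives $f''(f^{-1}(w))\bigl((f^{-1})'(w)\bigr)^{2}+f'(f^{-1}(w))(f^{-1})''(w)=0$, and solving at $w=0$ with the value of $(f^{-1})'(0)$ already in hand yields the stated expression for $(f^{-1})''(0)$.

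For the parameter derivatives of the composition $\Ge(a\Fe(z))$, my plan is to substitute the $\epsilon$-Taylor expansions of $\Fe$ and $\Ge$ and collect powers of $\epsilon$. Writing
\begin{equation*}
\Fe(z)=z+\epsilon\dot{F}(z)+\tfrac{\epsilon^{2}}{2}\ddot{F}(z)+O(\epsilon^{3}),\qquad \Ge(w)=w+\epsilon\dot{G}(w)+\tfrac{\epsilon^{2}}{2}\ddot{G}(w)+O(\epsilon^{3}),
\end{equation*}
I form $a\Fe(z)=az+\epsilon a\dot F(z)+\tfrac{\epsilon^{2}}{2}a\ddot F(z)+O(\epsilon^{3})$ and insert this as the argument of $\Ge$. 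The leading piece contributes $a\Fe(z)$ itself, the $O(\epsilon)$ piece requires $\dot G(a\Fe(z))$ expanded about $az$ through first order in the displacement $a\epsilon\dot F(z)$, and the $O(\epsilon^{2})$ piece needs only the value $\ddot G(az)$ through second order. Reading off the coefficient of $\epsilon$ reproduces (\ref{first}), and twice the coefficient of $\epsilon^{2}$ reproduces (\ref{second}).

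The decisive simplification is that $G_{0}(w)=w$ and $F_{0}(z)=z$, so the $\epsilon=0$ values of $G'$ and $G''$ are $1$ and $0$; this kills the Fa\`a di Bruno cross term $G_{0}''(az)\bigl(a\dot F(z)\bigr)^{2}$ and explains why the answer depends only on $\dot F,\ddot F,\dot G,\ddot G$. There is no real obstacle here---the argument is bookkeeping---the only point requiring care is notational: the dot denotes an $\epsilon$-derivative at zero, so $\dot G$ is a function of its one complex argument and the $z$-derivative appearing in (\ref{second}) is the chain-rule derivative of $\dot G$ evaluated at $az$.
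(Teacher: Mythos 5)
Your proposal is correct and follows essentially the same route as the paper: the paper computes the first and second $\epsilon$-derivatives of $\Ge(a\Fe(z))$ by the chain rule and then sets $\epsilon=0$, which is the same bookkeeping as your Taylor-expansion-and-collect-coefficients version, and both hinge on the same observation that $G_0(w)=w$ kills the $G_0''(az)(a\dot F(z))^2$ cross term and reduces $\big(\frac{d}{dz}G_0\big)a\ddot F(z)$ to $a\ddot F(z)$. The inverse-derivative formulas are likewise obtained exactly as you describe (the paper simply calls them immediate).
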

 \begin{proof}
 	The derivative formulas for $f^{-1}$ are immediate.  The first and second parameter derivatives of $\Ge(a\Fe(z))$ are
 	$$ \dot{\Ge}(a\Fe(z))\,+\,\big(\frac{d}{dz}\Ge(a\Fe(z))\big) a\dot{\Fe}(z)
 	$$
 	and
 	\begin{multline*} 	 	
 	\ddot{\Ge}(a\Fe(z))\,+\,2\big(\frac{d}{dz}\dot{\Ge}(a\Fe(z))\big)a\dot{\Fe}(z) \\ 
 	+\,\big(\frac{d^2}{dz^2}\Ge(a\Fe(z))\big)\big(a\dot{\Fe}(z)\big)^2\,+\,\big(\frac{d}{dz}\Ge(a\Fe(z))\big)a\ddot{\Fe}(z),
    \end{multline*}
    where $\dot{ }\,$ simply indicates the $\epsilon$ derivative.  The desired formulas follow from evaluating $\epsilon=0$ with the initial condition $F_0(z)=G_0(z)=z$.  	
 \end{proof}
 
 The first application is for $\Ge(z)$ an invertible function of $z$ for small $\epsilon$.  For $\Fe(z)$ the inverse function, then $\Ge(\Fe(z))=z$ and the parameter derivatives of the composition vanish.  The initial derivative formulas (with $a=1$) are
 \begin{equation}\label{inv1}
 \dot{F}_0(z)\,=\,-\dot{G}_0(z)
 \end{equation}
 and
 \begin{equation}\label{inv2}
 \ddot{F}_0(z)\,=\,-\ddot{G}_0(z)\,+\,2\dot{G}_0(z)\frac{d}{dz}\dot{G}_0(z).
 \end{equation}
 
 \section{The four variations}\label{4var}
 
 The Schiffer variation (deformation) is given by removing a (coordinate) disc and reattaching by a function holomorphic near the disc boundary.  The variation is trivial if the attaching function is holomorphic on the disc and otherwise can be non trivial.  In particular, if $\gamma$ is a simple closed curve in a $z$ coordinate disc and $r(z)$ is a holomorphic function in a neighborhood of $\gamma$, then the interior of the curve can be attached by identifying $\gamma(t)$ with $\gamma(t)+\epsilon r(\gamma(t)),$ for $\epsilon$ small \cite[Section 7.8, Deformation by attaching a cell]{SchS1}.  The construction gives a family in $\epsilon$.  The variation can also be understood by the overlap rule, $z$ on a neighborhood of $\gamma$ is identified with $w=z+\epsilon r(z)$ on a neighborhood of $\gamma$.  The values of $r(z)$ are understood as displacements relative to the domain variable; the $\epsilon$-derivative of the overlap identification is the displacement vector field $r(z)\frac{d}{d z}$ on the overlap.  This {\it sliding overlaps} description aligns with the Kodaira-Spencer \v{C}ech cohomology formulation of infinitesimal deformations in $\Check{H}^1(\Theta),$ for $\Theta$ the sheaf of holomorphic vector fields  \cite[I, Section 5]{KoSp}. The Schiffer variation predates the \v{C}ech description by some twenty years.  
 
  The Serre duality pairing of $\Check{H}^1(\Theta)$ with the space  of holomorphic quadratic differentials can be evaluated using the $\bar{\partial}$-resolution of the sheaf of holomorphic vector fields \cite{GunRS}.  For the above vector field and $U,V$ an open cover with $\gamma$ a positively oriented core curve of the topological annulus $U\cap V$, the duality pairing is 
 $$
 \int_{\gamma} r(z)\varphi
 $$
 for a holomorphic quadratic differential $\varphi$ expressed in terms of the coordinate $w$.

 We now use slit mappings to define deformations with a geometric cutting-and-edge-regluing description.  The description is tailored to study deformations of {\it flat structures} on Riemann surfaces.  In particular for appropriate deformations at the zero of an Abelian differential, the differential will reglue to an Abelian differential on the new surface.  Equivalently, flat structures are deformed to flat structures by cutting and edge regluing.  In particular for $F(z)$ and $G(z)$ Section \ref{slit} slit mappings, then the scalings $\Fe(z)=\epsilon F(z/\epsilon)$ and $\Ge(z)=\epsilon G(z/\epsilon)$ are holomorphic for $\epsilon$ small and $z$ in the complement of a compact set.  Each of $\Fe(z)$ and $\Ge(z)$ is a variation of the identity fixing infinity.  The composition $\Fe(\Ge^{-1}(w))$ has the following geometric description. Begin with the plane $\mathbb C$ and cut open the slits for $\Ge$; map the configuration of slit edges to the unit circle by $\Ge^{-1}$; map the unit circle to the slits of $\Fe$ by $\Fe$ and finally glue the adjacent edges.  The maps are Riemann mappings for the slit complements and the cut open slit domains are complements of degenerate analytic polygons.  The maps are homeomorphisms of the configurations of cut open slits.  The deformation is given by identifying $w$ with $\zeta=\Fe(\Ge^{-1}(w))$. The cutting and gluing pattern provides that the map is not continuous on $\mathbb C$.  The cutting and gluing is on a set of scale $\epsilon$.  For $\epsilon$ small, away from the origin, the identification of $w$ to $\Fe(\Ge^{-1}(w))$ is holomorphic and close to the identity.  Formally speaking, $\Fe(\Ge^{-1}(w))$ is a holomorphic family varying from the identity.  We will compute its first and second variations (as \v{C}ech $1$-cocycles valued in vector fields)  and show that the families deform special Abelian differentials and flat structures.
 
 Consider that the parameter $w$ is a local coordinate for a Riemann surface $R$.  Consider an open cover of $R$ consisting of two open sets: $U$ a relatively compact subset of the domain of $w$ and $V$ the complement of a relatively compact subset of $U$.    Consider that the point $w=0$ is contained in $U$, not in $V$ and that $U\cap V$ is a topological annulus.  We write $\zeta$ for the local coordinate $w$ restricted to its domain in $V$. For $\zeta=A_{\epsilon}(w)$ a holomorphic family varying from the identity ($A_0(w)=w$), such as the family $\Fe(\Ge^{-1}(w))$ above, a family of Riemann surfaces $R_{\epsilon}$ is defined by: $p\in U$ is identified with $q\in V$ provided $\zeta(q)=A_{\epsilon}(w(p))$.  The infinitesimal variation of the family is described by the displacement vector field $\frac{d}{d\epsilon} A_{\epsilon}(w)\frac{d}{dw}$ on $U\cap V$.

 We begin with considering the scaling of the classical slit mapping $S(z)$ (see (\ref{koebemap}))
 \begin{equation}\label{Se}
 w\,=\,S_{\epsilon}(z)\,=\,\epsilon S(z/\epsilon)\,=\,z\,+\,\frac{\epsilon^2}{z}.
 \end{equation}
 The domain of $\Se(z)$ is $\{|z|>\epsilon\}$ and the range is the complement of $\{w\mid -2\epsilon \le \Re w\le 2\epsilon, \Im w=0\}$.  For $\theta$ the natural angle parameter of the radius $\epsilon$ circle, the boundary mapping is $\Re w=2\epsilon\cos\theta$.  
The variation of $S_{\epsilon}(z)$ from the identity is {\it linear} in $\epsilon^2$. The first  variation cocycle  is
  \begin{equation}
  \label{SSvar}
  \frac1z\frac{d}{dz}
  \end{equation}
  and the second variation cocycle is trivial.    
 
 Deformations can be given by cutting and regluing the pattern of trajectories at zeros of an Abelian differentials.  The first composition to consider is for an integer $n>1$, $e_n=e^{\pi i n}$ an $n^{th}$ root of $-1$, the map  $e_n^{-1}\Ne(e_n\Ne^{-1}(w))$, for the scaling of the map (\ref{star}).  The map is a variation from the identity.  The composition is a family in $\epsilon^n$, since $\Ne(z)$ is a family in $\epsilon^n$.  Using the binomial series we find the expansion
 $$
 \Ne(z)\,=\,z\big(1\,-\,\frac2n\frac{\epsilon^n}{z^n}\,+\,\frac1n (\frac2n-1)\frac{\epsilon^{2n}}{z^{2n}}\,+\,O(\epsilon^{3n})\big).
 $$
In particular for the initial $\epsilon^n$ variations, we have
$$
\dot{\Ne}(z)\,=\,-\frac2n\frac{1}{z^{n-1}}
\quad\mbox{and}\quad \ddot{\Ne}(z)\,=\,\frac2n(\frac2n-1)\frac{1}{z^{2n-1}}.
$$
By formulas (\ref{inv1}) and (\ref{inv2}) the initial $\epsilon^n$ variations of $\Ne^{-1}$ are
$$
\dot{\Ne^{-1}}(w)\,=\,\frac2n\frac{1}{w^{n-1}}\quad\mbox{and}\quad \ddot{\Ne^{-1}}(w)\,=\,\frac{4-6n}{n^2}\frac{1}{w^{2n-1}}.
$$
Applying Proposition \ref{elem}, we find the first and second variation  cocycles
\begin{equation}
\label{NNvar} 
\dot{e_n^{-1}\Ne(e_n\Ne^{-1}}(w))\,=\,\frac{4}{nw^{n-1}}\frac{d}{dw}
\  \mbox{and}\ \   \ddot{e_n^{-1}\Ne(e_n\Ne^{-1}}(w))\,=\,\frac{16(1-n)}{n^2w^{2n-1}}\frac{d}{dw}.
\end{equation}

We consider the geometric description of the mapping.  The symmetries of $\Ne$ are generated by three transformations: conjugation by rotation through angle $2\pi/n$, reflection in the real axis and reflection in the line $\arg w=\pi/n$.
The $n^{th}$ roots of unity map to the origin by $\Ne$ and the rotation by $\pi/n$ of the $n^{th}$ roots map to the tips of the regular $n$-star.   
The slits of the star are at the angles $(1+2k)\pi/n$ for $k=1,\dots,n$. Let $\bf{a}$ be a slit of the star and $\bf{a'}$ the counterclockwise consecutive slit of the star.  The mapping $\Ne(e_n\Ne^{-1}(w))$ identifies the counterclockwise edge of $\bf{a}$ with the clockwise edge of $\bf{a'}$.  The symmetry of $\Ne$ provides that the identification is the reflection across the bisector of the intermediate sector.  The composition interchanges the origin and the endpoints of the star.  The Abelian differential $\omega=-w^{n-1}dw$ is positive on the slit of the star and is conjugated by the reflection across the bisector of an intermediate sector.   By Schwarz reflection, the pushforward (the map is invertible) of $\omega$ by $\Ne(e_n\Ne^{-1})$ extends to be holomorphic on $\mathbb C$ except possibly at the images of the origin and the images of the star tips.  The local form of the map is $v=u^n$ at the vertex of a star sector. Accordingly $dv=nu^{n-1}du$ and at the image of the vertex the pushforward of $\omega$ is holomorphic and nonzero.  The local form is $v^n=u$ at a tip of the star.   Accordingly $nv^{n-1}dv=du$ and at the image of a tip the pushforward of $\omega$ is holomorphic with a zero of order $n-1$.  The map $\Ne(e_n\Ne^{-1})$ pushes forward $\omega$ to an Abelian differential with an order $n-1$ zero at the origin.  The local construction in a small disc is based on using the coordinate for the normal form of the differential as the coordinate for the map.  The periods of an Abelian differential are not changed by a local construction in a disc; the deformation is isoperiodic.  Since the slits are a null set for the Hermitian square of $\omega$, the integral norm of $\omega$ is given by integrating over the slit complement and the norm is not changed by the mapping.  For a compact Riemann surface, the second observation also follows from the first by the Riemann bilinear relations.     
 
 The second composition to consider is $\Se(\Pe^{-1}(w))$ for the scaling of the maps (\ref{koebemap}) and (\ref{sectormap}).  The composition maps the complement of equal length slits, at angles $\pm\pi/3$ with the positive real axis, to the complement of $\{\zeta\mid -2\epsilon\le\Re \zeta\le 2\epsilon,\Im \zeta=0\}$.  The composition is a family in $\epsilon$ since $\Se$ is a family in $\epsilon^2$ and $\Pe$ is a family in $\epsilon$.  Using the binomial series for $(1+\epsilon/z)^{4/3}$ and $(1-\epsilon/z)^{2/3}$, we find the expansion
 $$
 \Pe(z)\,=\,z\,+\,\frac23\epsilon\,-\,\frac79\frac{\epsilon^2}{z}\,+\,O(\epsilon^3).
 $$
 For the initial $\epsilon$ variations we have
 $$
 \dot{\Pe}(z)\,=\,\frac23\quad\mbox{and}\quad\ddot{\Pe}(z)\,=\,-\frac{14}{9}\frac1z.
 $$
 By formulas (\ref{inv1}) and (\ref{inv2}) the initial variations of $\Pe^{-1}$ are
 $$
 \dot{\Pe^{-1}}(w)\,=\,-\frac23\quad\mbox{and}\quad \ddot{\Pe^{-1}}(w)\,=\,\frac{14}{9}\frac{1}{w}.
$$
Recalling the initial variations $\dot{\Se}=0$ and $\ddot{\Se}=2/z$, applying Proposition \ref{elem}, we find the  first and second variation cocycles  
\begin{equation}\label{SPvar}
-\frac23\frac{d}{dw}\qquad\mbox{and}\qquad\frac{32}{9}\frac1w\frac{d}{dw}.
\end{equation} 
The first cocycle is the \v{C}ech coboundary of the assignment $-2/3$ on $U$ and $0$ on $V$; the first cocycle is the trivial variation.  

We consider the geometric description of the mapping $\Se(\Pe^{-1})$.   The range of $\Pe$ has a positive slit $\bf{p}$ where $\Im \zeta\ge0$ and a negative slit $\bf{n}$ where $\Im \zeta\le 0$.  
Denoting the upper and lower edges of each slit by $\pm$s, we have the quadrilateral of consecutive edges $\bf{p^-}, \bf{p^+}, \bf{n^-}, \bf{n^+}$ maps by $\Pe^{-1}$ to the radius $\epsilon$ circle and by $\Se$ to the slit $\{\zeta\mid -2\epsilon\le\Re \zeta\le 2\epsilon,\Im\zeta=0\}$.  The mapping identifies $\bf{p^+}$ with $\bf{n^-}$ and $\bf{p^-}$ with $\bf{n^+}$.  Points symmetric with respect to the real axis are identified. The Abelian differential $\omega=-w^2dw$ is conjugated by the reflection in the real axis and is positive on the slits and the rays $\arg z=\pm \pi/3$. By Schwarz reflection, it follows that the pushforward of $\omega$ by $\Se(\Pe^{-1})$ extends to be holomorphic on $\mathbb C$ except possibly at the images of the vertices of the quadrilateral.  The local form of the map is $v=u^3$ at the vertex of the sector with angle $2\pi/3$.  Accordingly $dv=3u^2du$ and at the image the pushforward of $\omega$ is holomorphic and nonzero.  At the tips of the slits, the local form of the map is $v^2=u$ (a local disc maps to a local half disc). Accordingly $2vdv=du$ and at the image the pushforward is holomorphic with a simple zero at the image point (the slit tips are identified by the map).  Finally at the vertex of the $4\pi/3$ sector the local form is $v^2=u^3$.  Accordingly $2vdv=3u^2du$ and at the image the pushforward of $\omega$ is holomorphic with a simple zero.   The map $\Se(\Pe^{-1})$ pushes forward $\omega$ to an Abelian differential with simple zeros at distance scale $\epsilon$.  The map $\Se(\Pe^{-1})$ {\em splits a double zero}.  The local construction in a small disc is based on using the coordinate for the normal form of the differential as the coordinate for the map.  The periods of an Abelian differential are not changed by a local construction in a disc; the deformation is isoperiodic.  Since the slits are a null set for the Hermitian square of $\omega$, the integral norm of $\omega$ is given by integrating over the slit complement and the norm is not changed by the mapping.  For a compact Riemann surface, the second observation also follows from the first by the Riemann bilinear relations. 

The above splitting a double zero is close to the Kontsevich-Zorich  breaking up a zero \cite[Section 4.2, Figure 2]{KontZor}.  In particular start with a disc neighborhood of a double zero tiled by six equi angular sectors. Label the sectors by the angle from the positive axis; sector $k$ is $(k-1)\pi/3\le\arg w\le k\pi/3, 1\le k\le 6$.  The sectors are Kontsevich-Zorich's six half discs.  The deformation identifies the scale $\epsilon$ (corresponding to the Kontsevich-Zorich displacement $\delta$) segments of $\bf{p}^-$ and $\bf{n}^+$ from the boundaries of sectors $1$ and $6$ to form an additional segment on the positive axis.  The deformation identifies the segments $\bf{p}^+$ and $\bf{n}^-$ from the boundaries of sectors $2$ and $5$ to form a common boundary  between the sectors.  The present deformation does not have the left-right symmetry of Kontsevich-Zorich.  The reader can check that the $4$-slits mapping $\tilde{P}(z)=z(1-1/z^2)^{2/3}(1+1/z^2)^{1/3}$ does have the left-right symmetry and the composition  $\Se(\tilde{P}_{\epsilon}^{-1})$ does realize the Kontsevich-Zorich breaking up a double zero.  

The above splitting a double zero is an example of a general zero splitting.  In particular, for positive integers $m<n$, the special mapping
$$
Q(z) \,=\,z(1+\frac{1}{z})^{\frac{2(n-m)}{n}}(1-\frac{1}{z})^{\frac{2m}{n}}
$$   
is the Riemann mapping from the exterior of the unit disc to the complement of equal length slits, each at angle $\pi m/n$, with the positive axis.  For the scaling $Q_{\epsilon}(z)=\epsilon Q(z/\epsilon)$, the composition $S_{\epsilon}(Q_{\epsilon}^{-1}(w))$ maps the complement of equal length slits at angles $\pm \pi m/n$ with the positive real axis, to the complement of $\{\zeta\mid -2\epsilon\le \Re \zeta\le 2\epsilon,\Im\zeta=0\}$.  The composition $S_{\epsilon}(Q_{\epsilon}^{-1})$ is a family in $\epsilon$. The first and second variation cocycles can be computed as above. 

We consider the geometric description of the splitting map $\Se(Q_{\epsilon}^{-1})$.   The range of $Q_{\epsilon}$ has a positive slit $\bf{p}$ where $\Im \zeta\ge0$ and a negative slit $\bf{n}$ where $\Im \zeta\le 0$.  
Denoting the upper and lower edges of each slit by $\pm$s, we have the quadrilateral of consecutive edges $\bf{p^-}, \bf{p^+}, \bf{n^-}, \bf{n^+}$ maps by $Q_{\epsilon}^{-1}$ to the radius $\epsilon$ circle and by $\Se$ to the slit $\{\zeta\mid -2\epsilon\le\Re \zeta\le 2\epsilon,\Im\zeta=0\}$.  The mapping identifies $\bf{p^+}$ with $\bf{n^-}$ and $\bf{p^-}$ with $\bf{n^+}$.  Points symmetric with respect to the real axis are identified. The Abelian differential $\omega=-w^{n-1}dw$ is conjugated by the reflection in the real axis and is positive on the slits and the rays $\arg z=\pm \pi m/n$. By Schwarz reflection, it follows that the pushforward of $\omega$ by $\Se(Q_{\epsilon}^{-1})$ extends to be holomorphic on $\mathbb C$ except possibly at the images of the vertices of the quadrilateral.  The local form of the map is $v^m=u^n$ at the vertex of the sector with angle $2\pi m/n$.  Accordingly $mv^{m-1}dv=nu^{n-1}du$ and at the image the pushforward of $\omega$ is holomorphic with a zero of order $m-1$.  At the tips of the slits, the local form of the map is $v^2=u$ (a local disc maps to a local half disc). Accordingly $2vdv=du$ and at the image the pushforward is holomorphic with a simple zero at the image point (the slit tips are identified by the map).  Finally at the vertex of the $2\pi(n-m)/n$ sector the local form is $v^{n-m}=u^n$.  Accordingly $(n-m)v^{n-m-1}dv=nu^{n-1}du$ and at the image the pushforward of $\omega$ is holomorphic with a  zero of order $n-m-1$.   The map $\Se(Q_{\epsilon}^{-1})$ pushes forward $\omega$ to an Abelian differential with zeros of orders $n-m-1, 1$ and $m-1$ respectively at the left, middle and right points of the slit  $\{\zeta\mid-2\epsilon\le\Re \zeta \le 2\epsilon, \Im\zeta=0\}$.  The map $\Se(Q_{\epsilon}^{-1})$ {\em splits an order $n$ zero}.  The local construction in a small disc is based on using the coordinate for the normal form of the differential as the coordinate for the map.  The periods of an Abelian differential are not changed by a local construction in a disc; the deformation is isoperiodic.  Since the slits are a null set for the Hermitian square of $\omega$, the integral norm of $\omega$ is given by integrating over the slit complement and the norm is not changed by the mapping.  For a compact Riemann surface, the second observation also follows from the first by the Riemann bilinear relations. 

The third composition to consider is $\Se(\Ke^{-1}(w))$ for the scaling of maps (\ref{koebemap}) and (\ref{skeanmap}).  The composition maps the complement of a horizontal-vertical skean to the complement of $\{\zeta\mid -2\le \Re \zeta\le 2,\Im w=0\}$.  The composition is a family in $\epsilon^2$ since $\Se$ and $\Ke$ are each families in $\epsilon^2$.  Using the binomial series for $(1-\epsilon^2e^{2i\theta}/z^2)^{1/2}$ and $(1-\epsilon^2e^{-2i\theta}/z^2)^{1/2}$, we find the expansion
$$
\Ke(z)\,=\,z\,-\,\frac{\epsilon^2\cos 2\theta}{z}\,+\,\frac{\epsilon^4(1-\cos 4\theta)}{4z^3}\,+\,O(\epsilon^6).
$$
For the initial $\epsilon^2$ variations we have
$$
\dot{\Ke}(z)\,=\,-\frac{\cos 2\theta}{z}\quad\mbox{and}\quad\ddot{\Ke}(z)\,=\,\frac{(1-\cos 4\theta)}{2z^3}.\
$$
By formulas (\ref{inv1}) and (\ref{inv2}) the initial variations of $\Ke^{-1}$ are
$$
\dot{\Ke^{-1}}(w)\,=\,\frac{\cos2\theta}{w}\quad\mbox{and}
\quad\ddot{\Ke^{-1}}(w)\,=\,\frac{-1+\cos4\theta-4\cos^22\theta}{2w^3}.
$$
Recalling the initial variations now in $\epsilon^2$, $\dot{\Se}=1/z$ and $\ddot{\Se}=0$, applying Proposition \ref{elem}, we find the first and second variation cocycles are
\begin{multline}\label{SKvar}
\dot{\Se(\Ke^{-1}(}w))\,=\,\frac{1+\cos2\theta}{w} \frac{d}{dw}\quad\  \mbox{and}\\  \ddot{\Se(\Ke^{-1}(}w))\,=\,\frac{-1-4\cos2\theta+\cos4\theta-4\cos^22\theta}{2w^3} \frac{d}{dw}.
\end{multline}

We consider the geometric description of the mapping.  The maps $\Se$ and $\Ke$ are symmetric with respect to reflections in the real and imaginary axes.  It follows that the composition $\Se(\Ke^{-1})$ is also symmetric with respect to the reflections in the axes.  We do not use the additional symmetry of $\Ke$: $z\rightarrow iz,\ \theta\rightarrow \pi/2-\theta$.  The symmetries provide that the horizontal skean tips map to the horizontal slit tips and the vertical skean tips map to the origin.   Relatedly the four complementary sector vertices map to a pair of points on the slit, symmetric with respect to the origin.  The Abelian differential $\omega=wdw$ is real on the skean and is conjugated by the reflections.  By Schwarz reflection, the pushforward of $\omega$ by $\Se(\Ke^{-1})$ extends to be holomorphic on $\mathbb C$ except possibly at the images of the origin and the images of the skean tips.  At a horizontal tip the local form of the map is $v=u$ and the pushforward is holomorphic and nonzero. At a vertical tip the local form of the map is $v^2=u$.  Accordingly $2vdv=du$ and at the image of a vertical tip the pushforward of $\omega$ is holomorphic with a simple zero.  The local form of the map is $v=u^2$ at the vertex of a sector (a local quarter disc maps to a local half disc). Accordingly $dv=2udu$ and at the image of a sector vertex the pushforward of $\omega$ is holomorphic and nonzero.  The map $\Se(\Ke^{-1})$ pushes forward $\omega$ to an Abelian differential with a simple zero at the origin.  The local construction in a small disc is based on using the coordinate for the normal form of the differential as the coordinate for the map.  The periods of an Abelian differential are not changed by a local construction in a disc; the deformation is isoperiodic.  Since the slits are a null set for the Hermitian square of $\omega$, the integral norm of $\omega$ is given by integrating over the slit complement and the norm is not changed by the mapping.  For a compact Riemann surface, the second observation also follows from the first by the Riemann bilinear relations.   

The deformation matches the simplest case of the  Kontsevich-Zorich construction at a zero \cite[Section 4.2, Figure 2]{KontZor}.  In particular start with a disc neighborhood of a simple zero tiled by the four quadrants.  The quadrants are four Kontsevich-Zorich half discs.  Referring to the $\Se(\Ke^{-1})$ mapping intermediate segments on the unit circle, the map $\Se$ identifies the circular segments $[\theta, \pi]$ and $[-\pi/2,-\theta]$ (in reverse order) and $[\pi/2,\pi-\theta]$ with $[\pi+\theta,3\pi/2]$ (in reverse order).  The $\Se$ images are horizontal segments.  The four segments have $\Ke$ images on the vertical boundaries of the quadrants. In summary vertical boundary segments are reglued to horizontal boundary segments in the manner of Kontevich-Zorich. 

\section{Variation of Green's functions and Riemann period matrices}\label{sec4}

We follow Schiffer's analysis for the variation of Green's functions and Abelian differentials for  compact surfaces \cite[Section 7.8]{SchS1}.  We recall his treatment and use the setup to develop second order deformation expansions. Begin with $d\Omega_{q_0q_1}$ the Abelian differential of the third kind, periods with vanishing real parts, with a pole of residue $-1$ at $q_0$ and $+1$ at $q_1$ \cite[Section 4.1]{SchS1}.  Let $\Omega_{q_0q_1}$ be the indefinite integral, a multivalued holomorphic function with leading term $-\log(z(p)-z(q_0))$ near $q_0$ and $\log(z(p)-z(q_1))$ near $q_1$ for a generic local coordinate.  The multivalues differ by imaginary values.  Define the {\em double pole Green's function} \cite[page 98]{SchS1},
\begin{equation}
V(p,p_0;q,q_0)\,=\,\Re\{\Omega_{qq_0}(p)\,-\,\Omega_{qq_0}(p_0)\}.
\end{equation}
The Green's function is real harmonic, symmetric in the pairs $(p,p_0)$ and $(q,q_0)$, and anti symmetric in each of $(p,p_0)$ and $(q,q_0)$.  The exponential $e^{V(p,p_0;q,q_0)}$ has the Arakelov theory interpretation as a metric for the degree zero line bundle $\mathcal O(q_1-q_0)$ normalized to unity at $p_0$.   
 Also consider the holomorphic Abelian kernel \cite[Section 4.3, (4.2.25)]{SchS1},
\begin{equation}\label{LamV}
\Lambda(p,q)\,=\,-\frac{2}{\pi}\frac{\partial^2V(p,p_0;q,q_0)}{\partial p\partial q} dpdq\,=\,-\frac{1}{\pi}\frac{\partial^2\Omega_{qq_0}(p)}{\partial p\partial q}dpdq.
\end{equation}
In Schiffer's notation $p,q$ may represent points or may represent generic variables. The quantity $\Lambda$ is a symmetric complex tensor.   If the points $p,q$ lie in a common coordinate $z$, then $\Lambda$ has a coordinate expansion
\begin{equation*}
\Lambda(p,q)\,=\,\big(\frac{1}{\pi(z(p)-z(q))^2}\ +\ regular \ holomorphic\big)\,dz(p)dz(q).
\end{equation*}
The expansion can be used to define the regularization at $p$,
$$
\Lambda^{reg,z}(p)\,=\,\lim_{q\rightarrow p}\big(\Lambda(p,q)\,-\,\frac{dz(p)dz(q)}{\pi(z(p)-z(q))^2}\,\big).
$$
The regularization is a locally defined holomorphic quadratic differential.  

We describe bases for the space of Abelian differentials.  Let $K_1,\dots,K_{2g}$ be a canonical homology basis for the genus $g$ surface $R$, with odd elements the {\em A cycles} and even elements the {\em B cycles}.  In particular for $1\le \mu\ne \nu\le g$, the intersection relations are $K_{2\mu-1}\cdot K_{2\mu}=1$ and  $K_{2\mu-1}\cdot K_{2\nu}= K_{2\mu}\cdot K_{2\nu}=0$.  A standard normalized basis $\{\alpha_{\mu}\}$ for the Abelian differentials is defined by the condition $\int_{K_{2\mu-1}}\alpha_{\nu}\,=\,\delta_{\mu\nu},$ $1\le\mu,\nu\le g$, for the Kronecker delta, and the corresponding Riemann period matrix $\Pi_{\mu\nu}$ has entries $\int_{K_{2\mu}}\alpha_{\nu}$, for $1\le\mu,\nu\le g$, \cite{GHbook}. Results in the literature are often given in terms of such bases with normalized {\em A periods}.  The given Riemann period matrix is symmetric with positive definite imaginary part.  The Riemann bilinear relations provide that 
$$\frac{i}{2}\int_{\mathcal M} \alpha_{\mu}\wedge\overline{\alpha_{\nu}}= \,\Im\Pi_{\mu\nu}.
$$  
Colombo and Frediani present their results in terms of an orthonormal basis for the pairing $i\int_{\mathcal M}\alpha\wedge\overline{\beta}$, \cite{ColFr}. In some contrast, Schiffer uses the differentials dual in the pairing to the period functionals.  In particular, he considers the $2g$ differentials $dZ_{K_{\mu}}$ defined by
$$
\int_{K_{\mu}}\omega\,=\,-\frac{i}{2}\int_{\mathcal M}\omega\wedge\overline{dZ_{K_{\mu}}},
$$
for all Abelian differentials $\omega$ \cite[(3.1.20) and Section 3.3]{SchS1},  and defines a $2g\times 2g$ Hermitian Riemann period matrix by 
$$
\Gamma_{\mu\nu}\,=\,\frac{i}{2}\int_{\mathcal M}dZ_{K_{\mu}}\wedge\overline{dZ_{K_{\nu}}},
$$
\cite[(1.5.18) and (3.2.8)]{SchS1}. The setup is well suited to Schiffer's approach, since the differentials are given by integrals of the Abelian kernel \cite[(4.3.1)]{SchS1},
\begin{equation}\label{lamome}
dZ_{K_{\mu}}(q)\,=\,\int_{K_{\mu}}\Lambda(p,q).
\end{equation}
In particular, the Abelian differentials and period matrix are determined from the central quantity the double pole Green's function.  

We follow Schiffer's approach.  The defining property for each basis $\{\alpha_{\mu}\}, \{dZ_{K_{2\mu-1}}\}$ and $\{dZ_{K_{2\mu}}\}$ leads to  change of bases formulas. An orthonormal basis for the pairing $\frac{i}{2}\int\alpha\wedge\overline{\beta}$ is given as 
$$
\big(\Im \Pi\big)^{-1/2}_{\mu\nu}\big(\alpha_{\nu}\big)
$$
and change of bases as 
$$
\big(dZ_{2\mu-1}\big)\,=\,-\big(\Im\Pi\big)^{-1}_{\mu\nu}\big(\alpha_{\nu}\big)\quad\mbox{and}\quad  \big(dZ_{2\mu}\big)\,=\,-\big(\overline{\Pi}\big)_{\mu\nu}\big(\Im\Pi\big)^{-1}_{\nu\sigma}\big(\alpha_{\sigma}\big),
$$
for $1\le\mu,\nu,\sigma\le g$ and $(dZ_*)$,  $(\alpha_*)$ the appropriate column vectors of differentials.  The matrix $\Gamma_{2\nu-1\,2\sigma-1}$ represents the A periods of the basis $(dZ_{2\nu-1})$ and so is invertible.  We present three relations involving the basis.  The first relation $(\alpha_{\nu})=-(\Gamma)^{-1}_{2\nu-1\,2\sigma-1}(dZ_{2\sigma-1})$ is verified by evaluating the  A periods, the $K_{2\tau-1}$ integrals.  The second relation  $\Pi_{\nu\tau}=(\Gamma)^{-1}_{2\nu-1\,2\sigma-1}\Gamma_{2\sigma-1\,2\tau}$ is verified by evaluating the B periods, the $K_{2\tau}$ integrals, for the first relation.  The third relation $
(dZ_{2\mu})\,=\,\Gamma_{2\mu\,2\nu-1}(\Gamma)^{-1}_{2\nu-1\,2\sigma-1}(dZ_{2\sigma-1})$ 
is verified by evaluating the A periods, the $dZ_{2\tau-1}$ integrals. The last relation can also be written as $(dZ_{2\mu})^T=(dZ_{2\sigma-1})^T\overline{\Pi}_{\sigma\mu}$, since the matrix $\Gamma_*$ is Hermitian.  
In the following, we simplify the notation by writing $\omega_{\mu}=dZ_{K_{\mu}}$.

We proceed with the setup for the variation of the Green's function.  The construction is in terms of a given local coordinate $z$ with domain $U$ and a curve $\gamma$ bounding a disc in the domain of $z$.  
Let $r(z)$ be a holomorphic function in $z$ with domain a neighborhood of $\gamma$.     Provided $r(z)$ is suitably small, the deformation cocycle defines a new Riemann surface $R^*$ given by attaching the exterior of $\gamma$ (the complement of the disc bound by $\gamma$) to the interior of $\gamma^*=\gamma+r(\gamma)$ by identifying $z(p)$ on $\gamma$ to $z(p)+r(z(p))$ on $\gamma^*$.  Let $U_0$ be a disc subset of the domain of $z$ containing both $\gamma$ and $\gamma^*$.  The complement $R_0$ of $U_0$ is a common subdomain of $R$ and $R^*$.   In the following, quantities for $R^*$ are denoted by a $^*$.  

Schiffer's analysis is based on an exact formula for the variation of the Green's function.   Expansions are obtained from the exact formula.

\begin{theorem}\label{exactform}
	\textup{\cite[Theorem 7.5.1]{SchS1}.} Notation as above.  For points $p,p_0,q,q_0$ in the subsurface $R_0$, the Green's functions and Abelian differentials satisfy
	\begin{equation}
	\label{mainid}
	V^*(p,p_0;q,q_0)\,-\,	V(p,p_0;q,q_0)\,=\,\Re\bigg\{\frac{1}{2\pi i}\int_{\partial R _0}\Omega_{qq_0}(t)d\Omega_{pp_0}^*(t) \bigg\}.
	\end{equation}
\end{theorem}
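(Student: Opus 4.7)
The plan is to reduce the identity to a residue/Stokes computation on $R_0$ in which, after taking real parts, only the branch-cut jump of the multivalued function $\Omega_{qq_0}$ from $q_0$ to $q$ survives; Riemann reciprocity then converts that surviving contribution into $V^{*}-V$. The first step is to remove the polar behavior of $d\Omega^{*}_{pp_0}$ by subtracting $d\Omega_{pp_0}$. On the disc $U_0\subset R$ the function $\Omega_{qq_0}$ is single-valued (since $U_0$ is simply connected and contains none of the four distinguished points) and $d\Omega_{pp_0}$ is holomorphic, so $\Omega_{qq_0}\,d\Omega_{pp_0}$ is a single-valued holomorphic 1-form on $U_0$, and Cauchy's theorem gives $\int_{\partial U_0}\Omega_{qq_0}\,d\Omega_{pp_0}=0$. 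With $\partial R_0=-\partial U_0$ as oriented boundaries, this yields
\[
\int_{\partial R_0}\Omega_{qq_0}\,d\Omega^{*}_{pp_0}\;=\;\int_{\partial R_0}\Omega_{qq_0}\,d(\Omega^{*}_{pp_0}-\Omega_{pp_0}),
\]
and the second factor is now holomorphic on all of $R_0$, the simple poles at $p$ and $p_0$ having cancelled.

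Next I would cut $R_0$ along a canonical symplectic basis $A_j,B_j$ chosen inside $R_0$ and along an arc from $q_0$ to $q$, and delete small discs about $q$ and $q_0$, producing a simply-connected domain $P'$ on which $\Omega_{qq_0}$ becomes single-valued and the integrand is everywhere holomorphic. The cycles $A_j,B_j$ serve simultaneously as a canonical basis for $H_1(R)$ and $H_1(R^{*})$, since $R$ and $R^{*}$ differ from $R_0$ only by attaching a disc along $\partial R_0$, so both inclusions induce the same isomorphism on first homology. By Cauchy $\oint_{\partial P'}\Omega_{qq_0}\,d(\Omega^{*}_{pp_0}-\Omega_{pp_0})=0$, and $\partial P'$ decomposes into: (i) $\partial R_0$ itself; (ii) two copies of each $A_j$ or $B_j$ cut, contributing bilinear products of the jump of $\Omega_{qq_0}$ across the cut times the integral of $d(\Omega^{*}_{pp_0}-\Omega_{pp_0})$ along the cycle; (iii) two copies of the $q_0$-to-$q$ cut, across which $\Omega_{qq_0}$ jumps by $\pm 2\pi i$ owing to the logarithmic singularities; (iv) small circles about $q$ and $q_0$, whose integrals vanish with the radius because $\Omega_{qq_0}$ has only logarithmic blow-up while the other factor is holomorphic there.

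Solving for $\int_{\partial R_0}$, dividing by $2\pi i$, and taking real parts, the $A,B$-cycle contributions all disappear: by the normalization ``periods with vanishing real parts,'' each of $\oint_{A_j}d\Omega_{qq_0},\;\oint_{B_j}d\Omega_{qq_0},\;\oint_{A_j}d\Omega_{pp_0},\;\oint_{B_j}d\Omega_{pp_0}$ (and their starred $R^{*}$ analogues) is purely imaginary, so each bilinear period product is (pure imaginary)$\,\times\,$(pure imaginary)$\,=\,$real, hence becomes purely imaginary after division by $2\pi i$ and drops under $\Re$. The only surviving contribution is the $q_0q$-cut term, giving, with the orientation-fixed sign,
\[
\Re\big[\Omega^{*}_{pp_0}(q)-\Omega^{*}_{pp_0}(q_0)\big]\;-\;\Re\big[\Omega_{pp_0}(q)-\Omega_{pp_0}(q_0)\big].
\]
Invoking the symmetry of $V$, namely $\Re[\Omega_{pp_0}(q)-\Omega_{pp_0}(q_0)]=V(q,q_0;p,p_0)=V(p,p_0;q,q_0)$, and likewise on $R^{*}$, identifies the right-hand side with $V^{*}(p,p_0;q,q_0)-V(p,p_0;q,q_0)$, as asserted.

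The main obstacle is purely sign bookkeeping: tracking the induced orientations of the four types of boundary pieces of $P'$ and the signs of the monodromy jumps of $\Omega_{qq_0}$ across the $A_j,\;B_j$, and $q_0q$ cuts, so that the overall sign of contribution (iii) comes out $+$ rather than $-$. The supporting topological fact that $A_j,B_j\subset R_0$ can serve as a canonical homology basis for both $R$ and $R^{*}$ is essential, as it is what allows the ``vanishing real periods'' normalization to be invoked simultaneously for the starred and unstarred third-kind differentials and so forces the period contributions in (ii) to drop out under $\Re$.
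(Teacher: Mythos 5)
Your argument is correct and fills in, with the standard cut-domain bilinear-relations bookkeeping, exactly the proof the paper only sketches in one sentence (citing Schiffer--Spencer): the residue/Cauchy theorem on $R_0$ cut open, the vanishing real periods of $d\Omega$ and $d\Omega^{*}$ to kill the cycle contributions under $\Re$, and the logarithmic (fundamental-solution) behavior at $q,q_0$ together with the symmetry of $V$ to produce $V^{*}-V$. No substantive difference from the paper's intended route; only the sign conventions remain to be checked, as you note.
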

The proof combines the fundamental solution property of the Green's function, the residue theorem and the vanishing real periods for the differentials $d\Omega$.  

We are ready for the expansion.   Consider that the deformation cocycle defining $R^*$ has an expansion $r(z,\epsilon)=\epsilon\, r_1(z)+\frac{\epsilon^2}{2}\,r_2(z)+O(\epsilon^3)$ in a parameter $\epsilon$ with each term a vector field. The following generalizes Schiffer's first order expansion \cite[(7.8.8)]{SchS1}. The quantities involved are harmonic or holomorphic.  Supermum bounds immediately give rise to $C^k$ bounds and interchanging differentiation and integration is a straightforward matter.    

\begin{theorem}
	\label{main} Notation as above.  For points $p,p_0,q,q_0$ in the subsurface $R_0$, the Green's functions and Abelian differentials satisfy
			\begin{multline*}			
		V^*(p,p_0;q,q_0)\,-\,V(p,p_0;q,q_0)\,=\,\Re\bigg\{\frac{1}{2\pi i}
		\int_{\gamma} \epsilon\, r_1(t)d\Omega_{qq_0}(t)d\Omega_{pp_0}(t) \\
		+\,\frac{\epsilon^2}{2}\bigg(r_2(t)d\Omega_{qq_0}(t)d\Omega_{pp_0}(t)\,+\,r_1(t)^2d\Omega_{pp_0}(t)\frac{\partial}{\partial t}d\Omega_{qq_0}(t) \\
		+\,2i\int_{\tilde{\gamma}}r_1(s)\Lambda(s,t)d\Omega_{pp_0}(s)\,r_1(t)d\Omega_{qq_0}(t)\bigg)
				 \bigg\}\,+\,O(\epsilon^3),
			\end{multline*}
		where $\tilde{\gamma}$ is a curve homologous to $\gamma$ and in its interior\footnote{The present formula differs in sign from \cite[(7.8.8)]{SchS1}.  In transitioning from Theorem 7.5.1 to formula (7.8.4), Schiffer replaces $\partial\mathcal M_0$ with $\gamma$ without a sign change.  The region $\mathcal M_0$ is exterior to $\gamma$ and so the curve in (7.8.4) is negatively oriented.  Accordingly starting with (7.8.4) our formulas differ from Schiffer's by a sign. }.  The $\gamma$ integral is in the variable $t$ and the $\tilde{\gamma}$ integral is in the variable $s$.
\end{theorem}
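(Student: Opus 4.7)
The plan is to start from the exact formula of Theorem \ref{exactform}, deform the contour $\partial R_0$ down to the small curve $\gamma$ (picking up the orientation sign flagged in the footnote), and Taylor expand $d\Omega^*_{pp_0}(t)$ on $\gamma$ to order $\epsilon^2$ using the sliding-overlaps matching rule $z_{\mathrm{in}} = z_{\mathrm{out}} + r(z_{\mathrm{out}})$. Three tools do the work: Cauchy's theorem on the disc bounded by $\gamma$ discards integrands holomorphic on the interior, integration by parts along the closed curve $\gamma$ is valid because $\Omega_{qq_0}$ is single-valued on the simply connected $U_0$, and a first-order variation formula for Abelian differentials expressed via the Abelian kernel $\Lambda$ recognizes the inside-correction term as the iterated integral.

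Concretely, write $d\Omega_{pp_0} = F(z)\,dz$ in the coordinate $z$, and write $d\Omega^*_{pp_0}$ as $(F+\phi)(z)\,dz$ in the outside $R^*$-chart and $(F+\psi)(z)\,dz$ in the inside chart, where $\phi$ is the holomorphic correction on the exterior of $\gamma$ in $R$ and $\psi$ the holomorphic correction on the disc interior to $\gamma^*$. Pulling back the inside form through $z\mapsto z+r(z)$ gives the matching $F+\phi = \bigl[F(z+r)+\psi(z+r)\bigr](1+r')$ on $\gamma$. Expanding $r = \epsilon r_1 + \tfrac{\epsilon^2}{2}r_2 + O(\epsilon^3)$ and using the identity $F(z+r)(1+r') = F + (Fr)' + \tfrac{1}{2}(F'r^2)' + O(r^3)$, one reads off the jump relations $\phi_1 - \psi_1 = (F r_1)'$ at first order and $\phi_2 - \psi_2 = (F r_2)' + (F' r_1^2)' + 2(\psi_1 r_1)'$ at second order. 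Substituting into the deformed contour integral, the unperturbed term $\int_\gamma \Omega_{qq_0} F\,dt$ vanishes by Cauchy on the interior of $\gamma$, as does every $\psi$-integral against $\Omega_{qq_0}$ (both holomorphic inside), so the Green's-function difference collapses to $\int_\gamma \Omega_{qq_0}(\phi-\psi)\,dt$. Integrating each total derivative by parts (boundary terms vanish on the closed $\gamma$) and absorbing the orientation sign recovers the first-order term $\int_\gamma r_1\,d\Omega_{qq_0}\,d\Omega_{pp_0}$ and, at second order, three pieces: $\int_\gamma r_2\,d\Omega_{qq_0}\,d\Omega_{pp_0}$, a $t$-derivative term involving the product of basic differentials, and a cross term $\int_\gamma r_1 \psi_1\,d\Omega_{qq_0}$.

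The main obstacle is converting the cross term into the iterated integral against $\Lambda$ over the interior curve $\tilde\gamma$. Here $\psi_1(t)\,dt$ is, by construction, the first-order variation of $d\Omega_{pp_0}$ evaluated at $t$ in the inside chart, characterized by the jump data $(F r_1)'$ together with the requirement that $\phi_1\,dz$ extend to a meromorphic differential on the complement of the disc with the same poles as $d\Omega_{pp_0}$. The Cauchy-type kernel for precisely this jump problem on the compact surface $R$ is the Abelian kernel $\Lambda$, as made transparent by the definition $\Lambda(s,t) = -\tfrac{1}{\pi}\partial_s\partial_t\Omega_{tt_0}(s)\,ds\,dt$ in (\ref{LamV}). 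Either by differentiating the already-established first-order Green's-function formula in an auxiliary variable and invoking this identity, or by solving the first-order jump directly with an $\Lambda$-based Plemelj formula, one obtains $\psi_1(t)\,dt$ as an explicit contour integral of $r_1(s)\Lambda(s,t)\,d\Omega_{pp_0}(s)$ along a curve $\tilde\gamma$ interior to $\gamma$, producing the $2i$-iterated integral in the statement. Reconciling the $r_1^2$ term produced directly with the statement's asymmetric form $r_1^2\,d\Omega_{pp_0}\,\partial_t d\Omega_{qq_0}$ requires one further integration by parts in $t$, with the residual $r_1 r_1'$ cross term absorbed into the $\Lambda$-integral via the symmetry of the kernel. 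The bookkeeping of the prefactor $\tfrac{1}{2\pi i}$, the orientation sign, and the factor $2i$ is delicate but mechanical; the uniform bounds noted in the paragraph preceding the theorem justify all interchanges of differentiation and integration.
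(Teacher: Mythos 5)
Your proposal is correct in substance and reaches the stated formula, but it is organized genuinely differently from the paper's proof. The paper never decomposes $d\Omega^*_{pp_0}$ into inside and outside corrections: it adds the vanishing Cauchy integral $\frac{1}{2\pi i}\int_{\gamma_{\epsilon}}\Omega_{qq_0}(t_1)\,d\Omega^*_{pp_0}(t_1)=0$ to the exact relation of Theorem \ref{exactform}, rewrites it over $\gamma$ using $d\Omega^*_{pp_0}(t+r)=d\Omega^*_{pp_0}(t)$, Taylor expands only the scalar factor $\Omega_{qq_0}(t+r(t,\epsilon))$, and so obtains a preliminary expansion (\ref{Vexp}) in which the intact $d\Omega^*_{pp_0}$ still appears; the iterated $\Lambda$-integral then comes for free by applying $2\frac{\partial}{\partial q}$ to the first-order case of that same expansion, invoking (\ref{LamV}), and feeding the result back in. You instead expand the differential itself, writing $d\Omega^*_{pp_0}=(F+\phi)\,dz$ outside and $(F+\psi)\,dz$ inside and extracting jump data from the gluing; this Riemann--Hilbert reformulation is legitimate, your jump relations are right, and after the integrations by parts the integrands do agree (using $\psi_1=\phi_1-(Fr_1)'$, the combination $\Omega'_{qq_0}F'r_1^2+2\Omega'_{qq_0}\psi_1 r_1$ matches $\Omega''_{qq_0}Fr_1^2+2\Omega'_{qq_0}\phi_1 r_1$ under $\int_\gamma$). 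Two cautions. First, your residual $r_1r_1'$ term is not ``absorbed into the $\Lambda$-integral via the symmetry of the kernel''; it combines with $(F'r_1^2)'$ into the total derivative $(Fr_1^2)'$ against $\Omega'_{qq_0}$, and one integration by parts then yields the stated $r_1^2\,d\Omega_{pp_0}\,\frac{\partial}{\partial t}d\Omega_{qq_0}$ term. Second, if you solve the first-order jump problem directly by a Plemelj-type formula, the jump data determines $\phi_1,\psi_1$ only up to adding a global holomorphic Abelian differential to both; the missing normalization is that the periods of $d\Omega^*_{pp_0}$ have vanishing real parts, which is precisely what singles out $\Lambda$ as the kernel, and you should say so. Your alternative route---differentiating the first-order Green's function expansion in the auxiliary variable and invoking (\ref{LamV})---is exactly the paper's bootstrap and sidesteps this issue. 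The paper's organization buys economy (no inside correction $\psi$ is ever needed and the kernel term is automatic); yours buys an explicit description of the deformed differential on both sides of $\gamma$, closer in spirit to the sliding-overlaps cocycle viewpoint of Sections \ref{slit} and \ref{4var}.
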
  
\begin{proof}
	The approach is to develop an expansion for the right hand side of (\ref{mainid}).   The curve $\gamma\epp=\gamma+r(\gamma,\epsilon)$ on $R^*$ bounds a topological disc. The structure sheaf $\mathcal O$ of $R^*$ is characterized in terms of germs on the exterior of $\gamma$ and the interior of $\gamma\epp$ that are holomorphic upon the identification $z\rightarrow z+r(z,\epsilon)$.  The quantities $\Omega_{qq_0}$ and $d\Omega^*_{pp_0}$ are holomorphic on the interior of $\gamma_{\epsilon}$. By Cauchy's Theorem we have that
	$$
	\frac{1}{2\pi i}\int_{\gamma\epp}\Omega_{qq_0}(t_1)d\Omega^*_{pp_0}(t_1)\,=\,0.
	$$
	  By definition for $t_1=t+r(t,\epsilon)$ we have $d\Omega_{pp_0}^*(t_1)=d\Omega_{pp_0}^*(t)$ for $t$ on $\gamma$.  We can now rewrite the integral as
	$$
	\frac{1}{2\pi i}\int_{\gamma}\Omega_{qq_0}(t+r(t,\epsilon))d\Omega^*_{pp_0}(t)\,=\,0.
	$$
	Taylor's Theorem  provides an expansion for the first factor of the integrand
	\begin{multline*}
	\Omega_{qq_0}(t+r(t,\epsilon))\,=\,\Omega_{qq_0}(t)\,+\,\epsilon r_1(t)\Omega_{qq_0}'(t)\\+\,\frac{\epsilon^2}{2}\big(r_2(t)\Omega_{qq_0}'(t)\,+\,r_1(t)^2\Omega_{qq_0}''(t)\big)\,+\,O(\epsilon^3).
	\end{multline*}
	We now add the integral to (\ref{mainid}) (see the footnote regarding the orientation of the integration curve) and substitute the Taylor expansion to find\footnote{The expansion is valid to the same order as the Taylor expansion.}  
	\begin{multline}\label{Vexp}
	V^*(p,p_0;q,q_0)\,-\,V(p,p_0;q,q_0)\,=\,\Re\bigg\{ \frac{1}{2\pi i} \int_{\gamma}\big( \epsilon r_1(t)d\Omega_{qq_0}(t)\\ +\,\frac{\epsilon^2}{2}\big(r_2(t)d\Omega_{qq_0}(t)\,+\,r_1(t)^2\frac{\partial}{\partial t}d\Omega_{qq_0}(t)\big)\big)d\Omega^*_{pp_0}(t)\bigg\}    \,+\,O(\epsilon^3).
	\end{multline}
	The final step is to use this preliminary expansion to expand for the differential $d\Omega^*_{pp_0}(t)$ on the right hand side\footnote{Schiffer's exact formula (7.8.6) provides for a complete expansion in $\epsilon$ by using the formula and relation  $d\Omega^*=2\frac{\partial}{\partial t}V^*$ to substitute iteratively for $d\Omega^*$ on the right hand side.}.  We use the symmetry of $V$ and write
	 ${\Omega_{pp_0}^*}'(t)=2\frac{\partial}{\partial t}V^*(t,t_0;p,p_0)=2\frac{\partial}{\partial t}V^*(p,p_0;t,t_0)$ and apply $2\frac{\partial}{\partial q}$ to the preliminary expansion and note (\ref{LamV}) to find
	 $$
	 d\Omega_{pp_0}^*(t)\,=\,d\Omega_{pp_0}(t)\,+\,\frac{i\epsilon}{2} \int_{\tilde{\gamma}} r_1(s)\Lambda(s,t)d\Omega_{pp_0}(s)\,+\,O(\epsilon^2),
	 $$
	 where the curve $\tilde{\gamma}$ is taken inside of $\gamma$ to ensure that the expansion is valid on $\gamma$.  The argument is complete.  	 
\end{proof}

Following Schiffer we combine (\ref{lamome}) and the expansion for the Green's function to obtain a variational formula for the period matrix.  The formulas relating different period matrices or bases can be used to find the variational formula for $\Pi_{\mu\nu}$.  

\begin{corollary}
	\label{derivperi}  Notation as above.  The Riemann period matrix satisfies
	\begin{multline*}
		\Gamma_{\mu\nu}^*\,-\,\Gamma_{\mu\nu}\,=\,\frac{\epsilon}{4}\int_{\gamma}r_1(t)\omega_{\mu}(t)\omega_{\nu}(t)\,+\, 
		\frac{\epsilon^2i}{8}\int_{\gamma}r_1(t)\omega_{\nu}(t)
		\int_{\tilde{\gamma}}r_1(s)\Lambda(s,t)\omega_{\mu}(s)\\
		+\,\frac{\epsilon^2}{8}\int_{\gamma}
		r_2(t)\omega_{\mu}(t)\omega_{\nu}(t)\,+\,r_1(t)^2\omega_{\mu}(t)\frac{\partial}{\partial t}\omega_{\nu}(t)\,+\,O(\epsilon^3).
			\end{multline*}
	\end{corollary}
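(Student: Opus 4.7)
The plan is to reduce $\Gamma^*_{\mu\nu} - \Gamma_{\mu\nu}$ to a double cycle integral of $\Lambda^* - \Lambda$ and then substitute the expansion from Theorem \ref{main}. Using the Schiffer formula $\omega_\mu(q) = \int_{K_\mu}\Lambda(p,q)$ from (\ref{lamome}) together with the defining pairing $\int_{K_\alpha}\omega = -\frac{i}{2}\int_{\mathcal{M}}\omega\wedge\overline{dZ_{K_\alpha}}$ applied with $\omega = dZ_{K_\mu}$ and $\alpha = \nu$, one obtains the identity $\int_{K_\nu}\omega_\mu = -\Gamma_{\mu\nu}$. Combining these writes $\Gamma_{\mu\nu}$ as the double cycle integral $-\int_{K_\nu}\int_{K_\mu}\Lambda(p,q)$. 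Since the cycles $K_\mu, K_\nu$ may be chosen inside the common subsurface $R_0$ of $R$ and $R^*$, the same curves serve for both period matrices, and the variation becomes
\[
\Gamma^*_{\mu\nu} - \Gamma_{\mu\nu} \,=\, -\int_{K_\nu}\int_{K_\mu}\bigl(\Lambda^*(p,q) - \Lambda(p,q)\bigr).
\]

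Next, I would connect $\Lambda^* - \Lambda$ to the Green's function variation via (\ref{LamV}): $\Lambda = -\frac{2}{\pi}\partial_p\partial_q V\,dp\,dq$. Since the right-hand side of Theorem \ref{main} is the real part of a function $F$ holomorphic in $p$ and $q$ separately, the Wirtinger identity $\partial_p\partial_q\Re\{F\} = \frac{1}{2}\partial_p\partial_q F$ applies. This gives $\Lambda^* - \Lambda = -\frac{1}{\pi}\partial_p\partial_q F\,dp\,dq + O(\epsilon^3)$ with $F$ the bracketed expression of Theorem \ref{main}. Uniform continuity of the integrands in all parameters permits interchanging $\partial_p\partial_q$ with the $\gamma$ and $\tilde\gamma$ integrals, and with the cycle integrals $\int_{K_\mu}$ and $\int_{K_\nu}$.

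Finally, applying $\partial_p\partial_q$ term by term, the symmetry $V(p,p_0;q,q_0) = V(q,q_0;p,p_0)$ together with (\ref{LamV}) yields $\partial_p\, d\Omega_{pp_0}(t) = -\pi\Lambda(p,t)/dp$ (viewed as a 1-form in $t$ with $p$ a parameter), and analogously $\partial_q\, d\Omega_{qq_0}(t) = -\pi\Lambda(q,t)/dq$. Integrating these against $dp$ over $K_\mu$ and $dq$ over $K_\nu$ then produces factors $-\pi\omega_\mu(t)$ and $-\pi\omega_\nu(t)$ by (\ref{lamome}). Processing the $\epsilon$ term of Theorem \ref{main} in this way yields the stated leading contribution proportional to $\int_\gamma r_1\omega_\mu\omega_\nu$; the $r_2$ term and the $r_1^2\partial_t d\Omega_{qq_0}$ term become the $r_2\omega_\mu\omega_\nu$ and $r_1^2\omega_\mu\partial_t\omega_\nu$ contributions; and the double-integral term, in which $\Lambda(s,t)$ is already present, becomes $\int_\gamma r_1(t)\omega_\nu(t)\int_{\tilde\gamma}r_1(s)\Lambda(s,t)\omega_\mu(s)$ after the same reductions. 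The main obstacle is the bookkeeping of signs and numerical factors of $\pi$ and $i$ through the several conversions: the Wirtinger $\frac{1}{2}$; the $-\frac{2}{\pi}$ from $\Lambda = -\frac{2}{\pi}\partial_p\partial_q V$; the two factors of $-\pi$ from $\partial d\Omega \to \Lambda$; and the sign in $\int_{K_\nu}\omega_\mu = -\Gamma_{\mu\nu}$. The leading coefficient $\frac{\epsilon}{4}$ can be cross-checked against the classical Schiffer-Rauch first-order formula as recorded in Corollary \ref{formu}: taking $r_1(z) = a/z$ and evaluating by residues recovers the expected expression $\frac{\epsilon\pi i}{2}a\omega_\mu(p)\omega_\nu(p)$.
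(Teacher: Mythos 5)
Your proposal follows essentially the same route as the paper: apply $-\frac{2}{\pi}\frac{\partial^2}{\partial p\partial q}$ to the expansion of Theorem \ref{main} to get an expansion of the Abelian kernel, then integrate $p$ over $K_{\mu}$ and $q$ over $-K_{\nu}$, using (\ref{lamome}) to convert the $\partial d\Omega$ factors into $\omega_{\mu}(t)\omega_{\nu}(t)$ and the left side into periods; your preliminary repackaging of $\Gamma_{\mu\nu}$ as a double cycle integral of $\Lambda$ is only a cosmetic reordering of those steps. The one substantive caution is the period normalization you defer to ``bookkeeping'': from the paper's definitions you correctly derive $\int_{K_{\nu}}\omega_{\mu}=-\Gamma_{\mu\nu}$, but carrying that through gives a leading coefficient $-\tfrac{i\epsilon}{2}$ rather than $\tfrac{\epsilon}{4}$, whereas the paper's own proof invokes the relation $-\tfrac{i}{2}\int_{K_{\nu}}\omega_{\mu}=\Gamma_{\mu\nu}$, which is what produces the stated constant; your proposed cross-check against Corollary \ref{formu} cannot settle this, since that corollary is itself derived from the present one.
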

	\begin{proof}  The expansion is a transformation of the Green's function expansion as follows. First apply $-\frac{2}{\pi}\frac{\partial^2}{\partial p\partial q}$ to the expansion and use the definition (\ref{LamV}) to obtain an expansion for the Abelian kernel $\Lambda$. 
	Next integrate $p$ over $K_{\mu}$, applying the property (\ref{lamome}), to obtain an expansion for the basis differentials $\omega_{\mu}$.  Finally integrate $q$ over $-K_{\nu}$, applying the definition $-\frac{i}{2}\int_{K_{\nu}}\omega_{\mu}=\Gamma_{\mu\nu}$ on the left to obtain periods and the property (\ref{LamV}) on the right to obtain the differentials $\omega_{\nu}$.   
	\end{proof}  
	
If the cocycles are rational functions with poles interior to $\gamma$, then the integrals are evaluated by products of values of the Abelian differentials and their derivatives at the poles.  Schiffer gives the first variation in his formula (7.8.15).  The first variation of $\Pi_{\mu\nu}$ is Rauch's formula \cite{Rauch}.

\begin{corollary}\label{formu}
	Notation as above.  Let $p$ be a point with a local coordinate $z$ with $z(p)=0$.  For constants $a,b\in\mathbb C$, $m\in\mathbb N$, let $r(z,\epsilon)=(\epsilon\frac{a}{z} +\frac{\epsilon^2}{2}\frac{b}{z^m} )\frac{d}{dz}\,+\,O(\epsilon^3)$ be a deformation cocycle for a punctured neighborhood of $p$.   The Riemann period matrix satisfies 
	\begin{multline*}\label{mainform}
	\Gamma_{\mu\nu}^*\,-\,\Gamma_{\mu\nu}\,=\,\frac{\epsilon\pi i}{2} a\,\omega_{\mu}(p)\omega_{\nu}(p)\,-\,\frac{\epsilon^2\pi^2i}{2}a^2\lambda(p)\omega_{\mu}(p)\omega_{\nu}(p)\\
	+\frac{\epsilon^2\pi i}{4}\bigg(a^2\omega_{\mu}'(p)\omega_{\nu}'(p)\,+\,b\frac{1}{(m-1)!}\frac{d^{m-1}}{dz^{m-1}}\big(\omega_{\mu}(z)\omega_{\nu}(z)\big)\big\vert_{z=p}\bigg)\,+\,O(\epsilon^3),
	\end{multline*}
	where the basis differentials $\omega_{\mu}$ and their derivatives are evaluated in the coordinate $z$ and $\lambda(z)dz^2$ is the Abelian kernel regularization $\Lambda^{reg,z}$.  
\end{corollary}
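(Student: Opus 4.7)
The plan is to specialize Corollary~\ref{derivperi} to $r_1(z)=a/z$ and $r_2(z)=b/z^m$ and evaluate each of its four integrals by the residue theorem in the local coordinate $z$ at $p$. Since $\omega_\mu,\omega_\nu$ are holomorphic at $p$ and each $r_j$ is rational with its only pole at $z=0$, every integrand on $\gamma$ is meromorphic inside $\gamma$ with a unique singularity at the origin, and the whole computation reduces to extracting residues.

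The easy contributions fall out at once. The first order integral $\tfrac{\epsilon}{4}\int_{\gamma}(a/t)\omega_\mu\omega_\nu$ has a simple pole with residue $a\omega_\mu(p)\omega_\nu(p)$, producing Rauch's term $\tfrac{\epsilon\pi i}{2}a\omega_\mu(p)\omega_\nu(p)$. The $r_2$-integral $\tfrac{\epsilon^2}{8}\int_{\gamma}(b/t^m)\omega_\mu\omega_\nu$ yields the $b$-term directly, the residue at $t=0$ being $\tfrac{b}{(m-1)!}\tfrac{d^{m-1}}{dz^{m-1}}(\omega_\mu\omega_\nu)|_{z=p}$. The $r_1^2$-integral $\tfrac{\epsilon^2}{8}\int_{\gamma}(a^2/t^2)\omega_\mu\partial_t\omega_\nu$ has a double pole at $t=0$ and contributes $\tfrac{\epsilon^2\pi i}{4}a^2\bigl(\omega_\mu'(p)\omega_\nu'(p)+\omega_\mu(p)\omega_\nu''(p)\bigr)$.

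The main obstacle is the double integral $\tfrac{\epsilon^2 i}{8}\int_{\gamma}(a/t)\omega_\nu(t)\int_{\tilde\gamma}(a/s)\Lambda(s,t)\omega_\mu(s)$. Because $\tilde\gamma$ sits inside $\gamma$, for each fixed $t\in\gamma$ the inner integrand is holomorphic in $s$ throughout the disc bounded by $\tilde\gamma$ except for the simple pole at $s=0$; the diagonal singularity of $\Lambda$ at $s=t$ lies outside $\tilde\gamma$ and contributes nothing. The $s$-residue at $0$ is $a\Lambda(p,t)\omega_\mu(p)$, so the problem reduces to evaluating $2\pi i\,a^2\omega_\mu(p)\int_{\gamma}\Lambda(p,t)\omega_\nu(t)/t\,dt$. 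The defining local expansion $\Lambda(p,t)=\tfrac{1}{\pi(z(p)-z(t))^2}+\lambda(p)+O(z(t)-z(p))$ with $z(p)=0$ creates a pole of order three at $t=0$: the principal part $1/(\pi t^3)$ contributes $\omega_\nu''(p)$ and the value of the regularization at the diagonal contributes $\lambda(p)\omega_\nu(p)$. After unwinding the factors of $2\pi i$ and the prefactor $\tfrac{\epsilon^2 i}{8}$, the double integral delivers $-\tfrac{\epsilon^2\pi i}{4}a^2\omega_\mu(p)\omega_\nu''(p)-\tfrac{\epsilon^2\pi^2 i}{2}a^2\lambda(p)\omega_\mu(p)\omega_\nu(p)$.

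Adding all four contributions, the $\omega_\mu(p)\omega_\nu''(p)$ piece from the double integral exactly cancels the corresponding piece from the $r_1^2$-integral, leaving precisely the two $a^2$-terms recorded in the statement together with the first order $a$-term and the $b$-term. The only non-routine step is extracting the regularization value $\lambda(p)$ from the singular part of $\Lambda$; once this is in hand everything else is a direct residue calculation and the claimed expansion follows.
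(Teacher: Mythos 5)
Your proposal is correct and takes essentially the same route as the paper: specialize Corollary \ref{derivperi} to $r_1=a/z$, $r_2=b/z^m$, evaluate the inner $\tilde\gamma$-integral by the residue at $s=0$ (noting the diagonal singularity of $\Lambda$ lies outside $\tilde\gamma$), and split $\Lambda$ into its $\frac{1}{\pi(t-s)^2}$ principal part plus the regular part to extract $\lambda(p)$ from the outer integral. Your explicit tracking of the cancellation of the $\omega_\mu(p)\omega_\nu''(p)$ terms between the $r_1^2$-integral and the double integral correctly fills in what the paper leaves to the reader.
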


\begin{proof}  We consider the above period matrix expansion with the curves $\gamma$ and $\tilde{\gamma}$ in the coordinate chart.  The integrands are holomorphic except at the point $p$.  The integrals are evaluated by computing residues at $p$. To that purpose, write 
	$$
	\Lambda(s,t)\,=\,\frac{dsdt}{\pi(t-s)^2}\,+\,\tilde{\Lambda}(s,t),
	$$ 
	where the quantities are in terms of the given coordinate and $\tilde{\Lambda}$ is a regular holomorphic $dsdt$ tensor.  By the  residue calculation  
	$$
	\int_{\tilde{\gamma}}\frac{\omega_{\mu}(s)ds}{s(t-s)^2}\,=\,2\pi i\,\frac{\omega_{\mu}(p)}{t^2},
	$$
	since $1/(t-s)^2$ is holomorphic in $s$ on the interior of $\tilde{\gamma}$.   The remaining integrals are calculated by substituting Taylor expansions in terms of the coordinate and calculating residues. 
\end{proof}

Let $R$ be a compact Riemann surface of genus $g\ge1$. The cotangent space of the deformation space at $R$ is the $1$ dimensional for $g=1$ and in general $3g-3$ dimensional space of holomorphic quadratic differentials.   We consider the moduli space of equivalence classes of pairs $(R,\omega)$ where $R$ is a homotopy marked surface and $\omega$ is an Abelian differential.  The family of pairs is the holomorphic Hodge bundle over the Teichm\"{u}ller space.  The fiber is $A(R)$ the space of Abelian differentials on $R$.  We are interested in the isoperiodic foliation with leaves the Abelian differentials with given periods.  As considered in Section \ref{4var}, 
given an Abelian differential the deformations corresponding to the maps $e_n^{-1}\Ne(e_n\Ne^{-1})$, $\Se(\Pe^{-1})$ and $\Se(\Ke^{-1})$ in a neighborhood of a zero of $\omega$ are isoperiodic deformations.  Each construction requires the coordinate for the normal form of the differential. The deformations are isoperiodic since the deformation is local in a small disc. In consequence the first and second cocycles of (\ref{NNvar}), (\ref{SPvar}) and (\ref{SKvar}) evaluate to zero in the formula of Corollary \ref{formu}.   

We now find that the Schiffer variations at the zeros of an Abelian differential span the tangent space of the isoperiodic level set of the differential in the deformation space. Start with a non trivial  differential $\omega$ and consider that $g>1$.   For an order $m$ zero at a point $p$, consider a general local coordinate $z$ with $z(p)=0$ and the first variation cocycles
$$
\frac{1}{z}\frac{d}{dz},\,\dots\,,\frac{1}{z^m}\frac{d}{dz}.
$$
We refer to these cocycles as the {\it Schiffer deformations of the zero} at $p$. A differential has a total of $2g-2$ Schiffer deformations at zeros.  Also for a  general point $q$ with general local coordinate $z(q)=0$, consider the first variation cocycles
$$
\frac{1}{z}\frac{d}{dz},\,\dots\,,\frac{1}{z^g}\frac{d}{dz}.
$$
We consider the Wronskian of a basis of Abelian differentials computed in a local coordinate.  Since the differentials are linearly independent, the determinant of the matrix
$$
\bigg(\frac{d^k}{dz^k}\omega_{2\sigma-1}\bigg)_{1\le\sigma\le g,\,0\le k\le g-1}
$$
is a non trivial local holomorphic function.  It is a classical observation that the determinant is a section of the canonical bundle of the surface to the power $\frac{(g+1)g}{2}$ \cite{GunRS}.  The  matrix is generically non singular.  
\begin{proposition}\label{defbasis}
	For $g>1$ and a non trivial differential $\omega$, a basis for the infinitesimal deformations of the surface is given by any $2g-3$ Schiffer deformations of (possibly multiple) zeros of $\omega$ and $g$ Schiffer deformations at a general point.  Any $2g-3$ Schiffer deformations at zeros of $\omega$ span the tangent space of the isoperiodic level.  
\end{proposition}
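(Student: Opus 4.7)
The plan is to exploit the Serre duality pairing $\check{H}^1(\Theta)\cong H^0(K^2)^*$ recalled at the beginning of Section \ref{4var}, under which a Schiffer cocycle $\frac{1}{z^k}\frac{d}{dz}$ at a point with local coordinate $z(p)=0$ pairs with $\varphi=\psi(z)\,dz^2\in H^0(K^2)$ by the residue $\psi^{(k-1)}(p)/(k-1)!$. This converts every claim about spans of Schiffer cocycles into a claim about common vanishing orders of holomorphic quadratic differentials, which is then handled by divisor arithmetic together with the Weierstrass gap theorem.

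First I would compute the span of the $2g-2$ Schiffer cocycles associated to the zeros of $\omega$. A quadratic differential $\varphi$ annihilates all of them precisely when it vanishes to order at least $m_j$ at each zero $p_j$ of order $m_j$, i.e., when $\varphi\in H^0(K^2(-D))$ for $D=(\omega)$. Since $D\sim K$, multiplication by $\omega$ is an isomorphism $H^0(K)\xrightarrow{\sim}H^0(K^2(-D))$, so the annihilator has dimension $g$ and the span has dimension $(3g-3)-g=2g-3$. The paper has already recorded that each of these cocycles produces vanishing first-order variation of the periods of $\omega$, so the span lies in the tangent to the isoperiodic level; since that level has codimension $2g$ in the Hodge bundle and hence dimension $2g-3$, the span coincides with the tangent to the isoperiodic level by dimension matching.

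Next I would verify that \emph{any} $2g-3$ of these cocycles span, by pinning down the unique linear relation among the $2g-2$ Schiffer cocycles. For $\varphi\in H^0(K^2)$ the section $\varphi/\omega$ is a meromorphic $1$-form whose poles lie only at the zeros of $\omega$, so the residue theorem $\sum_j\mathrm{Res}_{p_j}(\varphi/\omega)=0$ supplies the relation; a direct computation in the general local coordinate at each zero shows that the coefficient on every one of the $2g-2$ cocycles is non-zero, so deleting any single cocycle leaves a set of $2g-3$ still spanning the isoperiodic tangent.

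Finally, adjoin the $g$ Schiffer cocycles $\frac{1}{w^k}\frac{d}{dw}$, $k=1,\dots,g$, at a general point $q$. Their joint annihilator in $H^0(K^2)$ consists of the quadratic differentials vanishing to order $\ge g$ at $q$, and intersecting with $\omega\cdot H^0(K)$, using that $\omega(q)\ne 0$, reduces the common annihilator of all $3g-3$ cocycles to $\omega\cdot H^0(K-gq)$. The hard step is precisely this final vanishing: by the Weierstrass gap theorem the gap sequence at a non-Weierstrass point is $\{1,\dots,g\}$, so $h^0(gq)=1$ and Riemann--Roch forces $h^0(K-gq)=0$. The common annihilator is therefore zero, the $3g-3$ cocycles form a basis of the deformation space, and the proposition follows.
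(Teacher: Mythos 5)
Your argument is a genuinely different packaging of the proof. For the core computation you and the paper agree in spirit: the paper also works with the annihilator in $H^0(K^2)$ and also uses that a quadratic differential killed by the zero-cocycles is $\omega\alpha$ for an Abelian differential $\alpha$; your $H^0(K^2(-D))=\omega\cdot H^0(K)$ is the clean divisor-theoretic form of that step. The real divergences are at the two ends. For the $g$ cocycles at a general point the paper shows non-degeneracy via the non-vanishing of the Wronskian of a basis of Abelian differentials, while you use $h^0(K-gq)=0$ via the Weierstrass gap theorem; these are equivalent in content (the Wronskian vanishes exactly at Weierstrass points), and both require ``general point'' to mean non-Weierstrass. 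For the isoperiodic statement the paper computes the first variation of the $B$ periods explicitly in terms of $\dot{\Gamma}_{2\tau-1\,2\sigma-1}$ applied to $\omega\omega_{2\sigma-1}$ and uses the Wronskian again to show the $g$ general-point deformations surject onto the period directions, thereby \emph{deriving} that the isoperiodic tangent has dimension $2g-3$; you instead import the codimension of the period level as a known fact and conclude by dimension matching. Your route is shorter but leans on the submersivity of the period map, which the paper's computation establishes along the way. Your identification of the unique linear relation among the $2g-2$ zero-cocycles with the residue theorem for $\varphi/\omega$ (equivalently, with the principal parts of the global meromorphic vector field $1/\omega$) is a structural point the paper leaves implicit, and it is the right way to attack the ``any $2g-3$'' clause.

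That said, the one step that would fail as written is precisely the assertion that ``the coefficient on every one of the $2g-2$ cocycles is non-zero.'' At a zero $p_j$ of order $m_j\ge 2$ the coefficients in the unique relation are the Laurent coefficients $a_{-1},\dots,a_{-m_j}$ of the principal part of $1/\omega$ in the chosen coordinate, and only $a_{-m_j}$ is non-zero for every coordinate. In the normal-form coordinate $\omega=z^{m_j}dz$ all the lower coefficients vanish and the relation involves only the top cocycles $z^{-m_j}\frac{d}{dz}$; for instance in genus $2$ with a double zero, $z^{-2}\frac{d}{dz}$ in the normal-form coordinate is by itself a coboundary of the global vector field $1/\omega$, so deleting the cocycle $z^{-1}\frac{d}{dz}$ leaves a set that does not span. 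Your claim is therefore true only for a generic choice of local coordinate at each multiple zero, and you should either impose that genericity explicitly or weaken ``any $2g-3$'' accordingly. (The paper's own proof is phrased as if the zeros were simple and quietly relies on the same genericity of its ``general local coordinate,'' so the soft spot is shared; but in your write-up it is the one assertion that is false without a hypothesis.)
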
 
 
  \begin{proof} 
  	The first statement follows on showing that the annihilator in the space of holomorphic quadratic differentials of the collection of cocycles is the trivial subspace.  Consider a holomorphic quadratic differential $\varphi$ annihilated by $2g-3$ deformations of zeros of $\omega$.  By a residue calculation the differential $\varphi$ vanishes at the $2g-3$ zeros.   The quotient $\varphi/\omega$ is an Abelian differential with a possible simple pole at the remaining zero of $\omega$.  An Abelian differential cannot have a single simple pole by Riemann Roch.  The quotient is holomorphic and $\varphi=\omega\alpha$ for $\alpha$ a holomorphic Abelian differential.  We next consider the $g$ Schiffer deformations at a general point.  Choose a point such that $\omega$ is non zero and the Wronskian of an Abelian differentials basis is non singluar.  Let $z$ be a local coordinate such that $\omega=dz$ in a neighborhood.  Since $\omega$ has unit coefficient in the local coordinate, we have the equality of Wronskians
  	\begin{equation}
  	\label{Wronsk}
  	  	\bigg(\frac{d^k}{dz^k}\omega\omega_{2\sigma-1}\bigg)_{1\le\sigma\le g,\,0\le k\le g-1}\,=\,\bigg(\frac{d^k}{dz^k}\omega_{2\sigma-1}\bigg)_{1\le\sigma\le g,\,0\le k\le g-1}
  	\end{equation}
  	computed in the local coordinate $z$.  It follows that only the  product of  $\omega$ and the trivial  Abelian differential is annihilated by the $g$ Schiffer deformations.  The collection of $3g-3$ cocycles is a basis. 
  	
  	We consider the isoperiodic condition. Relative to a given homology basis, the A periods $(a_{\mu})$ of $\omega$ prescribe the linear combination of the normalized basis $\omega=(a_{\mu})^T(\alpha_{\mu})=-(a_{\mu})^T(\Im\Pi)_{\mu\nu}(\omega_{2\nu-1})$. For $(\Im\Pi)_*$ and $(\omega_*)$ depending on a variation parameter $\epsilon$, the {\em constant} A periods condition provides the variational equation $(a_{\mu})^T\dot{(\Im\Pi)}_{\mu\nu}\Gamma_{2\nu-1\,2\sigma-1}+(a_{\mu})^T(\Im\Pi)_{\mu\nu}\dot{\Gamma}_{2\nu-1\,2\sigma-1}=0$.  Similarly the variation of the B periods of $(a_{\mu})^T(\alpha_{\mu})$ is given in terms of   $(a_{\mu})^T\dot{(\Im\Pi)}_{\mu\nu}\Gamma_{2\nu-1
  	\,2\sigma}+(a_{\mu})^T(\Im\Pi)_{\mu\nu}\dot{\Gamma}_{2\nu-1\,2\sigma}$.  The period matrix $\Gamma_{2\sigma-1\,2\nu-1}$ is nonsingular and we can solve the first equation for $(a_{\mu})^T\dot{(\Im\Pi)}_{\mu\nu}$  to substitute into the second expression.  We apply the relations for period matrices to find a formula for the variation of the B periods 
  	$$
  	-\big(a_{\mu}\big)^T\big(\Im\Pi\big)_{\mu\tau}\dot{\Gamma}_{2\tau-1\,2\sigma-1}\Pi_{\sigma\rho}
    \,+\,\big(a_{\mu}\big)^T\big(\Im\Pi\big)_{\mu\nu}\dot{\Gamma}_{\,2\nu-1\,2\rho}.
  	$$
  	Since the first variation term of Corollary \ref{formu} is a bilinear functional of the  differentials $\omega_{\mu}$ and $\omega_{\nu}$, we can use 	
   the change of basis $(\omega_{2\rho})^T=(\omega_{2\sigma-1})^T\overline{\Pi}_{\sigma\rho}$  to write $\dot{\Gamma}_{2\nu-1\,2\rho}=\dot{\Gamma}_{2\nu-1\,2\sigma-1}\overline{\Pi}_{\sigma\rho}$. 
   Combining contributions we obtain the formula for the variation of B periods
  	$$
  	-2i\big(a_{\mu}\big)^T\big(\Im\Pi\big)_{\mu\tau}\dot{\Gamma}_{2\tau-1\,2\sigma-1}
  	\big(\Im \Pi\big)_{\sigma\rho}.
  	$$
  	
  	Since the formula of Corollary \ref{formu} is linear in each  differential, the quantities
  	$(a_{\mu})^T(\Im\Pi)_{\mu\tau}\dot{\Gamma}_{2\tau-1\,2\sigma-1}$ can be formally treated as the first order variation of the period of $\omega$ on the cycle $K_{2\sigma-1}$; the quantities are evaluated by applying  Schiffer variations to $\omega\omega_{2\sigma-1}$.  The quantities are then multiplied on the right by a nonsingular matrix.   
  	
  	We now apply the formula.  The Schiffer deformations at zeros of $\omega$ immediately give trivial evaluations for the quantities  $(a_{\mu})^T(\Im\Pi) _{\mu\tau}\dot{\Gamma}_{2\tau-1\,2\sigma-1}$.  In particular, the $2g-3$ deformations of zeros of $\omega$ are tangent to the isoperiodic locus.  For the general $g$ Schiffer deformations, we again use the setup with local coordinate $z$ with $\omega=dz$.  By Corollary \ref{formu}, the evaluation of the $g$ Schiffer deformations for the B periods are given  by (\ref{Wronsk}) right multiplied by a nonsingular matrix. Since the Wronskian is non singular, only the trivial combination of $g$ Schiffer deformations has trivial variation of the periods.   
  	\end{proof}
  	
  	\section{Variation of Abelian differentials}\label{sec5}
  	We use Schiffer's approach to give a complete expansion for the variation of an Abelian differential.  If the deformation cocycle has coefficient a rational function then the terms of the expansion are explicitly evaluated in terms of values and derivatives of the initial Abelian differential and the Abelian kernel at the appropriate poles.  The present expansion can be compared to the expansions of Yin \cite{Yin}, Zhao-Rao \cite{ZhRa} and Liu-Zhao-Rao \cite{LiZhRa}.  These authors use $\bar{\partial}$-methods and give expansions with iterated integrals of the Green's function acting on one-forms.  Their expansions do not immediately give differentials dual to cycles.  Schiffer's focus on the Green's function $V$ and Abelian kernel $\Lambda$ immediately gives differentials dual to cycles. Calculation of the Riemann period matrix is immediate from the differentials dual to the {\em A cycles}.  
  	
  	\begin{proposition} \label{omegform} Notation as above.  For a deformation cycle $\epsilon r(z)\frac{d}{dz}$, the variation of the Abelian differential $\omega_{\mu}$ is given by an asymptotic expansion 
  	\begin{multline*}	
  		\omega_{\mu}^*\,=\,\sum_{k=0}^{\infty}\epsilon^k\omega_{\mu}^{(k)},
  		\mbox{ where }\ \omega^{(0)}_{\mu}\,=\,\omega_{\mu}, \\ \mbox{and for } k \mbox{ positive, } \omega^{(k)}_{\mu}(t)\,=\,\frac{i}{2}\int_{\gamma}\,\sum_{m=1}^k\frac{r(s)^m}{m!}\big(\frac{\partial^{m-1}}{\partial s^{m-1}} \Lambda(t,s)\big)\omega_{\mu}^{(k-m)}(s).
   	\end{multline*}
   	 Each term is defined on the exterior of its integration curve. The curves are chosen to be nested; the curve for the $k^{th}$ integral is contained in the interior of the curve for the $k+1^{st}$ integral.  For $n$ non negative, the difference $\omega_{\mu}^*\,-\,\sum_{k=0}^n\epsilon^k\omega_{\mu}^{(k)}$ is bounded as $O(\epsilon^{n+1})$ for all small $\epsilon$.  
  	\end{proposition}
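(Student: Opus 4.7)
The plan is to extend the proof of Theorem \ref{main} to all orders in $\epsilon$, derive a self-referential integral equation for $\omega_\mu^*$, and iterate. The Cauchy-theorem step in the proof of Theorem \ref{main} yields $\frac{1}{2\pi i}\int_\gamma \Omega_{qq_0}(t+\epsilon r(t))\,d\Omega^*_{pp_0}(t) = 0$ for every small $\epsilon$, with no truncation of Taylor's theorem required. Since $r$ is holomorphic on a neighborhood of $\gamma$, the Taylor series $\Omega_{qq_0}(t+\epsilon r(t)) = \sum_{m\geq 0}\frac{\epsilon^m r(t)^m}{m!}\Omega_{qq_0}^{(m)}(t)$ converges uniformly on $\gamma$ for $\epsilon$ small, and the argument producing (\ref{Vexp}) promotes to the all-orders identity
\begin{equation*}
V^* \,-\, V \,=\, \Re\bigg\{\frac{1}{2\pi i}\int_\gamma \sum_{m=1}^\infty \frac{\epsilon^m r(t)^m}{m!}\,\Omega_{qq_0}^{(m)}(t)\,d\Omega^*_{pp_0}(t)\bigg\},
\end{equation*}
the all-orders Green's function expansion from which the proposition will be extracted.

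Next I pass from Green's functions to Abelian differentials by mimicking the derivation of Corollary \ref{derivperi}. Apply $-\frac{2}{\pi}\frac{\partial^2}{\partial p\partial q}dp\,dq$ to the identity and then integrate the variable $p$ over the cycle $K_\mu$. On the left this produces $\omega_\mu^*(q) - \omega_\mu(q)$ by (\ref{LamV}) and (\ref{lamome}). On the right, differentiating $\Omega_{qq_0}^{(m)}(t)$ in $q$ and accounting for the $dt\,dq$ tensor yields $\frac{\partial^{m-1}}{\partial t^{m-1}}\Lambda(q,t)$, while integrating $\frac{\partial}{\partial p}d\Omega^*_{pp_0}(t)$ over $K_\mu$ in $p$ yields (up to a factor of $-\pi$) $\omega_\mu^*(t)$ via the symmetry of $V^*$ used at the end of the proof of Theorem \ref{main}. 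Assembling the constants gives the integral equation
\begin{equation*}
\omega_\mu^*(q) \,=\, \omega_\mu(q) \,+\, \frac{i}{2}\int_\gamma \sum_{m=1}^\infty \frac{\epsilon^m r(t)^m}{m!}\,\frac{\partial^{m-1}}{\partial t^{m-1}}\Lambda(q,t)\,\omega_\mu^*(t),
\end{equation*}
of the form $\omega_\mu^* = \omega_\mu + T_\epsilon \omega_\mu^*$ for an integral operator $T_\epsilon$ whose $\epsilon$-expansion starts at $O(\epsilon)$.

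The stated expansion is now obtained by iterating this integral equation. Writing $\omega_\mu^* = \sum_{k\geq 0}\epsilon^k \omega_\mu^{(k)}$ formally and matching powers of $\epsilon$, the $\epsilon^0$ coefficient is $\omega_\mu^{(0)} = \omega_\mu$, and for $k \geq 1$ the $\epsilon^k$ coefficient is the sum over $m = 1,\ldots,k$ of the $\epsilon^m$-piece of $T_\epsilon$ applied to $\omega_\mu^{(k-m)}$, which is exactly the recursion asserted. The nesting of contours arises naturally: when $\omega_\mu^{(k-m)}(t)$ appears as an integrand at the outer level, its argument $t$ lies on the outer curve, but $\omega_\mu^{(k-m)}$ is itself an integral with kernel singular on the diagonal, so the contour defining it must lie strictly interior to the outer contour. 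Propagating this requirement through the iteration forces the full nesting asserted in the statement.

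Finally, the asymptotic bound $\omega_\mu^* - \sum_{k=0}^n\epsilon^k\omega_\mu^{(k)} = O(\epsilon^{n+1})$ follows by induction on $n$. The case $n=0$ is immediate from Theorem \ref{main}. For the inductive step, subtract $\sum_{k=0}^n\epsilon^k\omega_\mu^{(k)}$ from both sides of the integral equation and apply the inductive hypothesis to $\omega_\mu^*$ on the right: contributions of order up to $\epsilon^n$ cancel by construction, while what remains is an explicit $\epsilon^{n+1}$ piece from the Taylor tail of $T_\epsilon$ together with an integral of a uniformly $O(\epsilon^{n+1})$ quantity. Uniform bounds on $r$, on $\Lambda$ and its $t$-derivatives on the nested contours (where $q$ stays off the relevant diagonals), and on $\omega_\mu^*$ itself for small $\epsilon$ complete the estimate. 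The main obstacle will be exactly this bookkeeping: tracking the nested contours and the uniform constants through the iteration so that the remainder is controlled at each stage. The algebraic matching that produces the stated recursion is automatic once the integral equation is in hand.
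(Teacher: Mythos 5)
Your proposal is correct and follows essentially the same route as the paper: both derive the all-orders Green's function identity by substituting the full Taylor series into the Cauchy-theorem relation, convert it via $-\frac{2}{\pi}\frac{\partial^2}{\partial p\partial q}$ and integration over the cycle into the self-referential integral equation $\omega_\mu^* = \omega_\mu + T_\epsilon\omega_\mu^*$ (the paper's (\ref{omegastar})), extract the recursion by matching powers of $\epsilon$, and establish the $O(\epsilon^{n+1})$ remainder by induction using uniform bounds on the Taylor tail over nested contours. The only differences are cosmetic, e.g.\ you invoke Theorem \ref{main} for the base case where the paper argues directly from the integral equation and the boundedness of $\omega_\mu^*$.
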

  	\begin{proof}  We continue the discussion for formula (\ref{Vexp}) and consider substitution of the Taylor expansion for $d\Omega_{qq_0}$ to find the formula
  	\begin{multline*}
  	V^*(p,p_0;q,q_0)\,-\,V(p,p_0;q,q_0)\,=\\ \Re\bigg\{\frac{1}{2\pi i}\int_{\gamma}\,\sum_{m=1}^{\infty}\frac{(\epsilon r(t))^m}{m!}\big(\frac{\partial^{m-1}}{\partial t^{m-1}}d\Omega_{qq_0}(t)\big)d\Omega_{pp_0}^*(t)\bigg\}.
  	\end{multline*}
  	As in Corollary \ref{derivperi}, we apply $-\frac{2}{\pi}\frac{\partial^2}{\partial p\partial q}$ to the expansion and use the definition (\ref{LamV}) to obtain a resulting expansion for the Abelian kernel $\Lambda$.  Next we integrate $p$ over $B_{\mu}$ and apply property (\ref{lamome}), to find an expansion for $\omega_{\mu}$ 
  	\begin{equation}
  	\label{omegastar}
    	\omega_{\mu}^*(q)\,-\,\omega_{\mu}(q)\,=\,\frac{i}{2}\int_{\gamma}\,\sum_{m=1}^{\infty}\frac{(\epsilon r(t))^m}{m!}\big(\frac{\partial^{m-1}}{\partial t^{m-1}}\Lambda(q,t)\big)\omega_{\mu}^*(t).
  	\end{equation}
  	The expansion provides for the $\epsilon$ expansion of $\omega^*_{\mu}$, since the right hand side immediately has positive order in $\epsilon$.  Substituting the expansion $\omega_{\mu}^*=\sum_{k=0}^{\infty}\epsilon^k \omega_{\mu}^{(k)}$ gives the desired inductive definitions for the terms $\omega_{\mu}^{(k)}$. 
  	
  	We consider bounding the approximation of $\omega^*_{\mu}$.   Choose a finite cover of coordinate charts for the surface compatible with the deformation cocycle description.   Select the charts so that relatively compact subsets, one for each chart, also provide a cover.   Norms of quantities on the surface are given by considering the norms of local coordinate representations on the relatively compact subsets. The charts and relatively compact subsets play the corresponding role for small  deformations of the surface.  
  	
  	First consider the expansion $\sum_{m=0}^{\infty}\frac{(\epsilon r(t))^m}{m!}\frac{\partial^{m-1}}{\partial t^{m-1}}\Lambda(q,t)$, which is the Taylor series for $-\frac{1}{\pi}d\Omega_{qq_0}(t+\epsilon r(t))$.   In terms of the specified norms the sum $\sum_{m=a+1}^{\infty}\frac{(\epsilon r(t))^m}{m!}\frac{\partial^{m-1}}{\partial t^{m-1}}\Lambda(q,t)$ is bounded as $O(\epsilon^{a+1})$.   We proceed by  induction to prove that $\omega_{\mu}^*-\sum_{k=0}^n\epsilon^k\omega_{\mu}^{(k)}$ is bounded as $O(\epsilon^{n+1})$. For $n=0$, the desired bound follows from (\ref{omegastar}),  the bound for the series with $n=0$ and that $\omega_{\mu}^*$ is bounded for all small $\epsilon$. We then assume the bound for a given value of $n$ and substitute the approximation for $\omega_{\mu}^*$ into the right hand side of (\ref{omegastar}).  Given the bound for the series with $a=0$, we have 
  	\begin{multline*}
  	\omega_{\mu}^*(q)\,-\,\omega_{\mu}(q)\,=\,\frac{i}{2}\int_{\gamma}\,\sum_{m=1}^{\infty}\frac{(\epsilon r(t))^m}{m!}\big(\frac{\partial^{m-1}}{\partial t^{m-1}}\Lambda(q,t)\big) \sum_{k=0}^n\epsilon^k\omega_{\mu}^{(k)}(t)    \ +\ O(\epsilon^{n+2})\\
  	=\,\frac{i}{2}\int_{\gamma}\,\sum_{k=0}^n\epsilon^k\omega_{\mu}^{(k)}(t)
  	\sum_{m=1}^{n+1-k}\frac{(\epsilon r(t))^m}{m!} \big(\frac{\partial^{m-1}}{\partial t^{m-1}}\Lambda(q,t)\big) \ +\ O(\epsilon^{n+2}),
  	\end{multline*}
  	where we have applied the bound for the series with $a=n+1,\dots,1$.   Rearranging the double sums, we find the integral 
  	$$
  	\qquad=\, \frac{i}{2}\int_{\gamma}\,\sum_{j=1}^{n+1}\epsilon^j \sum_{m=1}^j \frac{(\epsilon r(t))^m}{m!}\big(\frac{\partial^{m-1}}{\partial t^{m-1}}\Lambda(q,t)\big) \omega_{\mu}^{(j-m)}(t),
  	$$
  	which is simply $\sum_{j=1}^{n+1}\epsilon^j\omega_{\mu}^{(j)}$.  The proof is complete. 
  	\end{proof}
  	
  	We explicitly calculate the first and second variation.  From Proposition \ref{omegform}, we have for $t$ a local coordinate with $\gamma$ in the coordinate chart and $q$ outside $\gamma$,
  	$$
  	\omega_{\mu}^{(1)}(q)\,=\,\frac{i}{2}\int_{\gamma} r(t)\Lambda(q,t)\omega(t)
  	$$
  	and
  	$$
  	\omega_{\mu}^{(2)}(q)\,=\,\frac{i}{2}\int_{\gamma} r(t)\Lambda(q,t)\omega_{\mu}^{(1)}(t)\,+\,\frac{r(t)^2}{2}\big(\frac{\partial}{\partial t}\Lambda(q,t)\big)\omega_{\mu}(t).
  $$
  Let $p$ be a point inside $\gamma$ with $t(p)=0$ and let $\epsilon \frac{a}{t}\frac{d}{dt}$ be the deformation cocycle.  Again we have the expansion of Corollary \ref{formu},
  $$
  \Lambda(p,t)\,=\,\frac{dpdt}{\pi t^2}\,+\,\tilde{\Lambda}(p,t)
  $$
  for the Abelian kernel near $p$, where $\tilde{\Lambda}$ is a regular $dpdt$ tensor.  The integrals are evaluated by computing residues at $p$.  We find the formulas for the variations of $\omega_{\mu}$
  \begin{multline}
  \label{abelvar}
  \quad\quad\quad\quad \omega_{\mu}^{(1)}(q)\,=\,-\pi a\Lambda(q,p)\omega_{\mu}(p)\quad
  \mbox{and} \\
  \omega_{\mu}^{(2)}(q)\,=\,\pi^2 a^2\Lambda(q,p)\lambda(p)\omega_{\mu}(p)\,-\,\frac{\pi}{2}a^2\frac{\partial}{\partial p}\Lambda(q,p)\frac{\partial}{\partial p}\omega_{\mu}(p),
  \end{multline}
  for $q\ne p$.

 \bibliographystyle{alpha}
 \bibliography{scottbib}
 
\providecommand\WlName[1]{#1}\providecommand\WpName[1]{#1}\providecommand\Wl{Wlf}\providecommand\Wp{Wlp}\def\cprime{$'$}

\end{document}